\theoremstyle{plain}
\newtheorem{theorem}{Theorem}[section]
\newtheorem{proposition}[theorem]{Proposition}
\newtheorem{lemma}[theorem]{Lemma}
\theoremstyle{remark}
\newtheorem{remark}[theorem]{Remark}
\def \p {\mathbf p}
\def \n {\mathbf n}
\def \q {\mathbf q}
\def \cB {\mathcal B}
\def \cP {\mathcal P}
\def \cK {\mathcal K}
\def \cS {\mathcal S}
\def \bV {\mathbb V}
\def \bZ {\mathbb Z}
\def \bR {\mathbb R}
\def \bN {\mathbb N}
\def \bP {\mathbb P}
\def \bQ {\mathbb Q}
\def \rmd {\mathrm d}
\def \rmB {\mathrm B}
\def \rme {\mathrm e}
\newcommand{\R}{\mathbb{R}}
\newcommand{\N}{\mathbb{N}}
\begin{document}
\begin{frontmatter}
\title{Large deviations  at the origin of random walk in random environment}

\runtitle {Large deviation at the origin of RWRE}

\begin{aug}
  \author[A]{\fnms{Alexander}
    \snm{Drewitz}\ead[label=e2,mark]{adrewitz@uni-koeln.de}},
  \author[B,C]{\fnms{Alejandro F.} \snm{Ram\'irez}\ead[label=e1,mark]{ar23@nyu.edu}}, 
   \author[C]{\fnms{Santiago} \snm{Saglietti}\ead[label=e4,mark]{sasaglietti@mat.uc.cl}}
\and 
\author[D]{\fnms{Zhicheng} \snm{Zheng}\ead[label=e3,mark]{zz4230@nyu.edu}}

\address[A]{Department of Mathematics and Computer Sciences, 
  Universit\"at zu K\"oln
  \printead{e2}}

\address[B]{NYU-ECNU Institute of Mathematical Sciences, 
NYU Shanghai 
\printead{e1}}

\address[C]{Facultad de Matem\'aticas,
Pontificia Universidad Cat\'olica de Chile 
\printead{e4}}

\address[D]{Courant Institute of Mathematical Sciences, 
  New York University 
  \printead{e3}}

\end{aug}

\begin{abstract}
  We consider a random walk in an i.i.d.\ random environment on~$\mathbb Z^d$
  and study properties of its large deviation rate function at the origin. It was proved 
  by Comets, Gantert and Zeitouni in dimension $d=1$ in 1999 and later by Varadhan in dimensions $d\ge 2$ in 2003 that, for uniformly elliptic i.i.d.\ random environments,  the quenched and the averaged large deviation
  rate functions coincide at the origin. Here we provide a description of an atypical event realizing the correct quenched large deviation rate in the nestling and marginally nestling setting: the random walk seeks regions of space where the environment emulates the element in the convex hull of the support of the law of the environment at a site which minimizes
  the rate function. Periodic environments play a natural role in this description.
\end{abstract}

\begin{keyword}[class=MSC]
\kwd[Primary ]{60K35, 60K37, 82B43}
\end{keyword}

\begin{keyword}
\kwd{Random walk in random environment}
\kwd{Large deviations}
\end{keyword}

\end{frontmatter}

\section{Introduction}
The model of random walk in random environment, describing the motion of a particle in a highly disordered medium,
has been an important cornerstone in the microscopic description of homogenization phenomena \cite{R11} and has more recently been
shown to fall within the KPZ universality class \cite{BC17}. Several fundamental questions about this model have not been fully answered yet, such as,
for example, the validity of the law of large numbers \cite{DR14} or the relation between directional transience and ballisticity.
Some understanding has, however,  been achieved with regards to the large deviations of the random walk, both at the quenched
and the annealed level. We refer to \cite{GdH94} for the first proof of the quenched large deviation principle for i.i.d.\ environments in dimension $d=1$, \cite{CGZ00} for an extension of the quenched large deviation principle to ergodic environments and the first proof of the annealed large deviation principle in dimension $d=1$, and \cite{Z98} for the quenched large deviation principle in dimensions $d\ge 1$ for the nestling case. The general proof of both a quenched and an annealed large deviation principle for uniformly elliptic i.i.d.\ random environments was established by
Varadhan \cite{V03} in 2003 for all dimensions $d\ge 1$.
In \cite{CGZ00} and \cite{V03}, it was also shown that at the origin both, the annealed and quenched large deviation rate functions coincide, and furthermore, 
a variational formula for their common value was provided. 

In this article, we describe a dominant behavior giving rise to the correct rate of exponential decay as $n \to \infty$ of the large deviation probability that the random walk returns to
the origin after $n$ steps in the so-called nestling or marginally nestling cases. In the quenched setting, we prove the following properties of this behavior, which we call
the {\it dominant behavior event} or {\it dominant event}, and of the rate function at $0$: ($i$) the rate
function at $0$ is equal to the rate function at 0 of a random walk in a certain periodic environment, which we shall refer to as the
{\it optimal environment}, whose site marginals are 
all identical  and correspond to a point in the boundary of the convex hull of the support of the law of the  random environment; ($ii$) on this dominant event, the random walk moves quickly to a region where the
environment is close to a periodic environment whose rate function coincides with the rate function of the random walk in the optimal environment; ($iii$) this periodic environment uses jump probabilities which are close to jump probabilities of the support of the law of the random environment. These results
complement previous ones, giving insight into the mechanism under which atypical behavior of the random walk occurs. In particular, they complement what is known in the nestling case, where the random walk looks for a trap or nest (see Zerner \cite{Z98} for the introduction of this terminology and Sznitman \cite{S00} for further developments), and stays there for most of the time. A homogenization result of a viscous Hamilton-Jacobi equation in a periodic potential is implicit in our results, including a representation of the homogenized Hamiltonian at $0$ (see \cite{LPV87} and \cite{E89} for the proof of homogenization of the viscous Hamilton-Jacobi equation in a periodic potential).

To provide some context about these questions, we now describe  some of the main results about large deviations of random walk
in random environment. In 1997, Greven and den Hollander \cite{GdH94} proved a quenched large deviation principle for random
walks in a uniformly elliptic i.i.d.\ random environment in dimension $d=1$. In 1999, Comets, Gantert and Zeitouni \cite{CGZ00}
extended this quenched large deviation principle to ergodic environments and, in addition, proved an annealed large
deviation principle for i.i.d.\ uniformly elliptic environments in dimension $d=1$. Zerner in  2000 \cite{Z98} proved a multidimensional
large deviation principle for the so-called random walks in nestling
environment. Finally, in 2003, Varadhan \cite{V03} proved a quenched
large deviation principle in dimensions $d\ge 1$ for random walk in a uniformly elliptic ergodic environment, together with
an annealed large deviation principle for random walk in uniformly elliptic i.i.d.\ environment. Furthermore, there he also showed that,
for any dimension $d\ge 1$, the quenched and the averaged rate
functions at $0$ are equal. Subsequently, Yilmaz in \cite{Y11} proved
that in dimensions $d \ge 4$, whenever the so called ballisticity condition $(T)$ (cf.\ \cite{S01,BDR11}) is satisfied,
there is a neighborhood of the velocity where the quenched and
annealed large
deviation rate functions are equal. As a complementary result, in
\cite{BMRS23} and \cite{BMRS23b}, it
was proven that in dimensions $d\ge 4$, given any compact set in the
interior or in the boundary of the domain of the rate functions not containing $0$, if the disorder or
randomness of the environment is low enough, the quenched and annealed
large deviation rate functions coincide in this compact set.  

The structure of the paper is as follows. 
We will first introduce in Section \ref{sec:NotRes} the notations and state the main results. Then we continue  by investigating large deviations for a random walk in time-periodic environment so that it necessarily spends prescribed fractions of the time in certain environments in Section \ref{sec:LDRWTP}. This analysis is comparatively easy but serves as a main building block for our more complex setup of space-periodic environments  investigated in Section \ref{sec:LDRWSP}. Indeed, space-periodic environments will play an important role since their mesoscopic approximations can naturally be found in large spatial balls due to a Borel-Cantelli type argument and our independence assumption. Using a refined analysis  we can then -- again approximately -- reduce the random walk in certain space-periodic environments to the one investigated in  Section \ref{sec:LDRWTP}. This, among other tools such as local central limit theorems for RWRE, will be a key ingredient in the proof of the main result which is conducted in Section \ref{sec:proofMain}.
\section{Notation and results} \label{sec:NotRes} To state our results, we begin by defining the models of random walk in random environment and random
walk in a periodic environment (or RWRE and RWPE, for short) on $\bZ^d$. 

For $p \in [1,\infty],$ denote by $|x|_p$ the $\ell^p$-norm of any $x \in \bR^d$ and introduce the set
$\bV:=\{e\in\mathbb Z^d: |e|_1=1\}$ of all unit vectors in $\bZ^d$. Furthermore, we define 
$$\mathcal P_\bV:=\Big\{\vec{p}=(p(e))_{e\in
  \bV} \in [0,1]^\bV : \sum_{e\in \bV}p(e)=1\Big\},$$
  the simplex of all probability vectors 	on $\bV$, and the product space $\Omega:=(\mathcal {P_\bV})^{\mathbb Z^d}$ endowed with the  product topology. Any element $\omega \in \Omega$ will be called an \textit{environment}, i.e., each $\omega=(\omega(x))_{x \in \bZ^d}$ is a family of probability vectors $\omega(x)=(\omega(x,e))_{e \in \bV}$ on $\bV$, indexed by the sites $x \in \bZ^d$.
  
Now, given some $x \in \bZ^d$ and $\omega \in \Omega$, we define the \textit{random walk in the environment $\omega$ starting from $x$} 
as the discrete time Markov chain $(X_n)_{n \in \bN_0}$ on $\bZ^d$ with law $P_{x,\omega}$ given by $P_{x,\omega}(X_0=x)=1$ and, for each $n \in \bN_0$, $y\in \bZ^d$ and $e \in \bV$, 
\[
P_{x,\omega}(X_{n+1}=y+e|X_n=y)=\omega(y,e).
\]
 Now, if the environment $\omega$ is chosen at random according to some probability measure $\bP$ on $\Omega$, we obtain a probability~law $P_x$ on $(\bZ^d)^{\bN_0} \times \Omega$ 
 (endowed with the product $\cB((\bZ^d)^{\bN_0}) \otimes \cB(\Omega)$  of the underlying Borel $\sigma$-algebras) 
 given by their semi-direct product, i.e.\ for any $A \in \cB((\bZ^d)^{\bN_0})$ and $B \in \cB(\Omega)$,
\[
P_x (A \times B) = \int_B P_{x,\omega}(A) \, \rmd \bP(\omega).
\]
We will refer to $P_{x,\omega}$ as the \textit{quenched law} 
and $P_x$ is the {\it averaged}
(or {\it annealed}) law of the {\it random walk in random environment} (RWRE), and we call $\bP$ the \textit{environment law} of the RWRE. Throughout the sequel, we will say that the random environment is i.i.d.\ if the random probability vectors $(\omega(x))_{x \in \bZ^d}$ are i.i.d.\ under $\bP$ and, moreover, say it is \textit{uniformly elliptic} if there exists $\kappa > 0$ such that
\[
\bP(\omega(x,e)\ge\kappa)=1 \quad \text{for all $x\in\mathbb Z^d$ and $e \in \bV$},
\] i.e., if there exists some $\kappa > 0$ such that the environment law $\bP$ has marginals which are supported on the set
\begin{equation} \label{eq:Pkappa}
\cP_{\bV}^{(\kappa)}:=\big \{ \vec{p} \in \cP_\bV : p(e) \geq \kappa \text{ for all }e \in \bV \big\}.
\end{equation}

Finally, in order to introduce periodic environments, fix some ${\bf n}=(n_1,\ldots,n_d) \in \bN^d$ and define the box $\mathbf{B}_{\n}:=\{x\in\mathbb Z^d: 0\le x_i\le n_i -1\ {\rm for}\  1\le i\le d\}$. In addition, for $x\in \bZ^d$ set
\[
x\ {\rm mod}\ {\bf n}=(x_1 \ {\rm mod} \ n_1,\ldots, x_d \ {\rm mod} \ n_d) \in \mathbf{B}_\n,
\] and, given ${\bf p}=(\vec{p}(x))_{x\in \mathbf{B}_{\bf n}} \in (\cP_\bV)^{\mathbf{B}_\n}$, define the environment $\omega_{\n,\p}\in\Omega$ by the formula
\begin{equation}
  \label{erwpe}
\omega_{\n,\p}(x)=\vec{p}(x\ {\rm mod} \ {\bf n})\quad \text{ for each }x\in \mathbb Z^d.
\end{equation}
Now, given $x \in \bZ^d$, the {\it random walk in the periodic environment (RWPE) $\omega_{\n,\p}$ starting from $x$} is the random walk on $\bZ^d$ with law $P_{x, \omega_{\n,\p}}$. We call the pair $(\n,\p)$ the \textit{period} of $\omega_{\n,\p}$.
 
Both RWRE and RWPE satisfy large deviation principles, which we describe next. For this purpose, let  
\[
\mathrm{D}:=\{x\in\mathbb R^d:|x|_1\le 1\} 
\] 
denote the unit $\ell^1$ unit ball in $\R^d$. 
In \cite[Theorems 2.3 and 3.2]{V03}, Varadhan established both a quenched and an averaged large deviation principle for random walks in i.i.d.\ (actually, for ergodic environments in the quenched case) uniformly elliptic 
environments on $\bZ^d$ for any dimension $d\ge 1$:

\begin{itemize}

\item[(i)] There exists a rate function $I_a:\mathrm{D}\to [0,\infty)$, which is both convex and continuous on the interior $\mathrm{D}^\circ$ of ${\rm D}$, such that, 
  for every Borel set $A \subseteq D$,
 \[
  -\inf_{x\in A^\circ}I_a(x)\le \liminf_{n\to\infty}\frac{1}{n}\log P_0\bigg(\frac{X_n}{n}\in A\bigg) 
  \le  \limsup_{n\to\infty}\frac{1}{n}\log P_0\bigg(\frac{X_n}{n}\in A\bigg)\le -\inf_{x\in\bar A}I_a(x). 
  \]
  
\item[(ii)] There exists a deterministic rate function $I_q:\mathrm{D}\to [0,\infty)$, convex and continuous on $\mathrm{D}^\circ$, such that, for $\mathbb P$-almost every $\omega$, given any Borel set $A \subseteq D$ one has 
  \[
  -\inf_{x\in A^\circ}I_q(x)\le \liminf_{n\to\infty}\frac{1}{n}\log P_{0,\omega}\bigg(\frac{X_n}{n}\in A\bigg) 
  \le  \limsup_{n\to\infty}\frac{1}{n}\log P_{0,\omega}\bigg(\frac{X_n}{n}\in A\bigg)\le -\inf_{x\in\bar A}I_q(x). 
  \]
\end{itemize}
In addition, in \cite{V03} it has been established that both rate functions agree at the origin, i.e.\ $I_a(0)=I_q(0)$. More precisely, given an arbitrary subset $S\subseteq \mathcal P_\bV$, let $\mathcal K_S$ denote its
convex hull and, given an environment law $\mathbb P$, define $\mathcal
K_{\mathbb P}:=\mathcal K_{\cS(\bP)}$ where $\cS(\bP)$ denotes the essential support of $\omega(0)$. Then, in~\cite{V03} the following was shown: 
\begin{itemize}
\item[(iii)] For any dimension $d\ge 1$, 
\begin{equation} \label{eq:rateFunc}
  I_a(0)=I_q(0)=-\log\Big(\inf_{\theta\in\mathbb R^d}\sup_{\vec{p} \in \mathcal K_{\mathbb P}} \sum_{e\in \bV}e^{\langle \theta, e\rangle }p(e)\Big).
\end{equation}
\end{itemize}

Furthermore, given a RWPE with law $P_{0,\omega_{\n,\p}}$, there exists a convex function $I_{\n,\p}$ such that for every Borel set $A \subseteq \bR^d$,
\begin{equation*}
\begin{split}
-\inf_{x\in A^\circ}I_{\n,\p} (x) &\le \liminf_{n\to\infty}\frac{1}{n}\log P_{0,\omega_{\n,\p}} \bigg(\frac{X_n}{n}\in A\bigg)\\ &\le 
  \limsup_{n\to\infty}\frac{1}{n}\log P_{0,\omega_{\n,\p}} \bigg(\frac{X_n}{n}\in A\bigg)\le -\inf_{x\in\bar A}I_{\n,\p}(x). 
\end{split}
\end{equation*}
Indeed, this is an immediate consequence of the quenched large deviation principle \cite[Theorem 2.3]{V03} applied to RWRE with the ergodic environment distribution 
\[
\bP_{\n,\p} := \frac{1}{|\mathbf{B}_{\n}|} \sum_{x \in \mathbf{B}_{\n}} \delta_{\omega_{\n,\p}(\cdot + x)}.
\]
Finally, to state the main result of this article, we need to introduce first the concept of nestling, marginally nestling and non-nestling RWRE. To this end, given any $\vec{p} \in \cP_{\bV}$, we define the \textit{drift} of $\vec{p}$ as
\begin{equation}
\label{eq:defdrift}
\rmd(\vec{p}):=\sum_{e\in \bV} p(e) e
\end{equation} and denote by $\mathcal{D}_0:=\{ \vec{p} \in \cP_\bV : \rmd(\vec{p})=0\}$ the set of all zero drift probability vectors on~$\bV$.
  Then, given a RWRE with environment law $\bP$, we will say that the RWRE is \textit{nestling} if $\text{int}(\mathcal{K}_\bP) \cap \mathcal{D}_0 \neq \emptyset$, {\it marginally nestling} if $\text{int}(\mathcal{K}_\bP) \cap \mathcal{D}_0 =\emptyset$ but $\partial \mathcal{K}_{\bP} \cap \mathcal{D}_0 \neq \emptyset$ and {\it non-nestling} if $\mathcal{K}_\bP \cap \mathcal{D}_0=\emptyset$. 
  
  For $\varepsilon,\delta>0$ chosen to be small enough later on, given 
  any period $(\n,\p) \in \N^d \times (\cP_\bV)^{\mathbf{B}_\n}$ and $N\ge 1$, 
  we define the event $G^{\varepsilon, \delta}_{N,\n,\p}:=\bigcup_{x_\n \in \mathbf{B}_\n} G^{\varepsilon, \delta}_{N,\n,\p,x_\n}$, where for $y\in\mathbb Z^d$,
  \begin{equation} \label{def:GDef}
G^{\varepsilon, \delta}_{N,\n,\p,y}:=\Big\{ \omega \in \Omega : 
      |\omega(x) -\omega_{\n,\p}(x)|_\infty \leq \varepsilon \text{ for all }x \in \rmB(y,\delta(\log N)^{1/d})\Big\},
  \end{equation}
  and we recall that  $\rmB(x,r)$ denotes the $\ell^1$-ball on $\bZ^d$ of 
  radius $r > 0$ centered at $x \in \bZ^d$. 
Furthermore, let us set $a_N:=\lfloor\frac{N}{\log N} \rfloor$ and, for $x_\n \in \mathbf{B}_\n$, define the following events: 
\begin{enumerate}
    \item [i.] $A_1^{(N,x_\n)}$, the event that the
random walk $(X_n)_{n \in \N_0}$ moves from $0$ to $x_\n$ in $a_N$ steps; 
\item [ii.] $A_2^{(N,x_\n)} = A_2^{(N,x_\n), \delta}$, the event that $X_{a_N}=x_\n$, $X_n \in \rmB(x_\n,\delta(\log N)^{1/d})$ for all $n \in [a_N,N-a_N]$ and $X_{N-a_N}=x_\n$;
\item [iii.] $A_3^{(N,x_\n)}$, the event that $X_{N-a_N}=x_\n$ and $X_N=0$. 
\end{enumerate} 
With this, define now the event $A^{\varepsilon, \delta}_{N,\n,\p}:=\bigcup_{x_\n \in \mathbf{B}_\n} A^{\varepsilon, \delta}_{N,\n,\p,x_\n}$, where
\[
A^{\varepsilon, \delta}_{N,\n,\p,x_\n}:= G^{\varepsilon, \delta}_{N,\n,\p,x_\n} \cap A_1^{(N,x_\n)} \cap A_2^{(N,x_\n)} \cap A_3^{(N,x_\n)},
\]
i.e., $A^{\varepsilon, \delta}_{N,\n,\p,x_\n}$ is the event that the random walk moves in $a_N=o(N)$ steps to a ball of
radius $\delta(\log N)^{1/d}$ (centered at $x_\n$) within  which the environment is at a distance $\varepsilon$ to the periodic environment $\omega_{\bf n, \bf p}$, stays there for a time that is strongly asymptotic to $N$, and then
returns to $0$ in $a_N$ steps (see Figure \ref{figure1sb}).

\begin{figure}
\begin{center} 
\vskip-3cm 
  \hspace*{-0.5cm}
  \includegraphics[height=15cm]{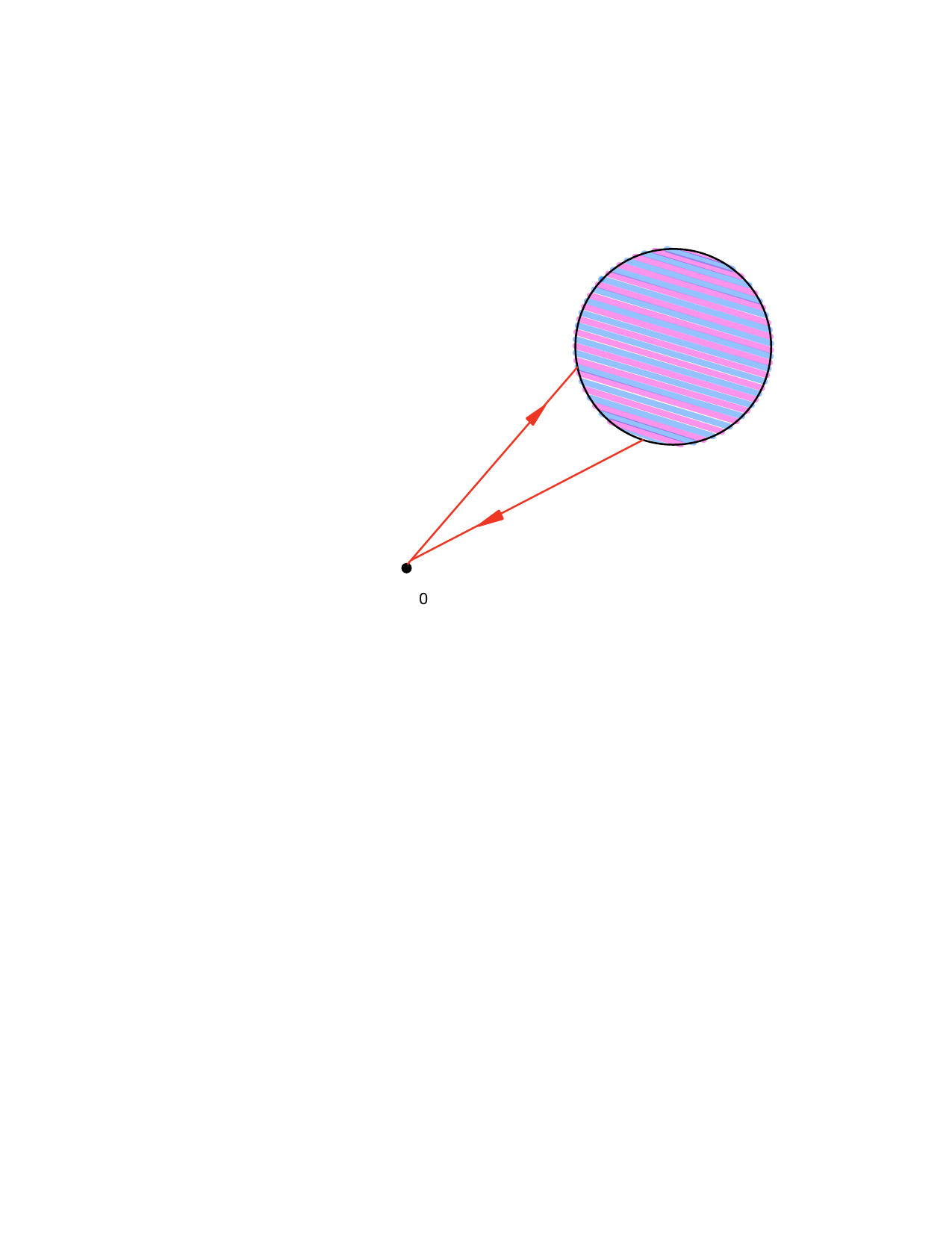}
 \vskip-7cm
\caption{Graphic representation of the event $A^\varepsilon_{N,\bf n,\bf p}$. The random walks starts at $0$, moving quickly through the upper red line to the ball, which is at a distance smaller than $N/\log N$, stays in the ball a time of order $N$ and then moves back through the second red line to the origin. The environment inside the ball, which has a radius $\delta(\log N)^{1/d}$, is periodic,  formed by alternating parallel strips each one composed of a different homogeneous environment belonging to the support of the law of $\omega(0)$.}
\label{figure1sb}
\end{center}
\end{figure}

We are now ready to state our main result.
  
\begin{theorem}
    \label{theorem1}
    Consider a random walk in a uniformly elliptic i.i.d.\ environment. 
Then, the following assertions hold true:
\begin{itemize}
\item[(i)] There exists $\vec{p}_\ast=(p_\ast(e))_{e \in \bV} \in \cK_{\bP}$
  such that  
\begin{equation}
\label{2p1}
I_q(0)=I_a(0)=-\log\Big(\sum_{e\in \bV}\sqrt{p_\ast(e)p_\ast(-e)}\Big)=:I(0).
\end{equation}
Furthermore, if the walk is either non-nestling or marginally nestling, then $p_*\in\partial 
\cK_{\mathbb P}$.

\item[(ii)] Suppose that the random walk is either non-nestling or marginally nestling. Then, for each $\varepsilon>0$ there exists some period $(\n_\varepsilon,\p_\varepsilon)$ such that
\[
|I(0)-I_{(\n_\varepsilon,\p_\varepsilon)}(0)|\le \varepsilon.
\]
\item[(iii)] Suppose that the random walk is either non-nestling or marginally nestling. Then, for each $\varepsilon>0$ there exists some period $(\n_\varepsilon,\p_\varepsilon)$ and a $\delta_0>0$ such that for every $\delta\in(0,\delta_0)$,
\begin{equation}
\label{bccl}
\mathbb P\big( \liminf_{N \to \infty} G^{\varepsilon, \delta}_{N,\bf n_\varepsilon,\bf p_\varepsilon}\big) =1 
\end{equation}
and, for $\bP$-almost every $\omega \in \Omega$,
\[
-I(0)-\varepsilon \leq 
\liminf_{N\to\infty}\frac{1}{N}\log P_{0,\omega}(A^\varepsilon_{N,\n_\varepsilon,\p_\varepsilon})\leq \limsup_{n \to \infty} \frac{1}{N}\log P_{0,\omega}(X_N=0) \leq -I(0).
\]
\end{itemize}
\end{theorem}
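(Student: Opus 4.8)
I would start from Varadhan's formula \eqref{eq:rateFunc}. On $\cK_{\bP}$ uniform ellipticity forces $p(e)\ge\kappa>0$, so the map $(\theta,\vec p)\mapsto\sum_{e\in\bV}\rme^{\langle\theta,e\rangle}p(e)$ is convex in $\theta$, linear (hence concave) in $\vec p$, and coercive in $\theta$ uniformly over $\vec p\in\cK_{\bP}$; Sion's minimax theorem then permits exchanging $\inf_\theta$ and $\sup_{\vec p}$, and for fixed $\vec p$ the identity $\inf_{t\in\bR}(\rme^{t}a+\rme^{-t}b)=2\sqrt{ab}$ applied in each of the $d$ coordinate pairs gives $\inf_{\theta}\sum_{e\in\bV}\rme^{\langle\theta,e\rangle}p(e)=\sum_{e\in\bV}\sqrt{p(e)p(-e)}=:f(\vec p)$. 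Hence $I(0)=-\log\sup_{\vec p\in\cK_{\bP}}f(\vec p)$, the supremum being attained at some $\vec p_\ast$ by continuity and compactness, which is \eqref{2p1}. To place $\vec p_\ast\in\partial\cK_{\bP}$ when the walk is non-nestling or marginally nestling, I would use that $f$ is concave on $\cP_{\bV}$, that $f\le 1$ with equality exactly on $\mathcal D_0$ (AM--GM), and that $f$ restricted to the hyperplane $\{\sum_{e}p(e)=1\}$ has no critical point off $\mathcal D_0$ (its gradient $\big(\tfrac12\sqrt{p(-e)/p(e)}\big)_{e}$ is parallel to $\mathbf 1$ only there); thus a relative-interior maximizer of $f|_{\cK_{\bP}}$ would lie in $\mathcal D_0$, contradicting $\cK_{\bP}\cap\mathcal D_0=\emptyset$ in the non-nestling case and $\mathrm{int}(\cK_{\bP})\cap\mathcal D_0=\emptyset$ in the marginally nestling case.

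\textbf{Part (ii).} Let $\theta_\ast$ be the minimizer in \eqref{eq:rateFunc} (unique, by the strict convexity coming from ellipticity); then $(\theta_\ast,\vec p_\ast)$ is a saddle point, so $\vec p_\ast$ maximizes the \emph{linear} functional $\vec p\mapsto\mu_{\vec p}(\theta_\ast):=\sum_{e\in\bV}\rme^{\langle\theta_\ast,e\rangle}p(e)$ over $\cK_{\bP}$, with maximum $f(\vec p_\ast)=\rme^{-I(0)}$. Writing $\vec p_\ast=\sum_{j=1}^{m}\lambda_j\vec q_j$ by Carathéodory with $\lambda_j>0$ and the $\vec q_j$ extreme points of $\cK_{\bP}$ (hence in $\cS(\bP)$), each $\vec q_j$ — being an active term in a convex combination of maximizers of $\mu_\cdot(\theta_\ast)$ — is itself a maximizer, so $\mu_{\vec q_j}(\theta_\ast)=\rme^{-I(0)}$ for every $j$. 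I would then take $\omega_{\n_\varepsilon,\p_\varepsilon}$ to consist of $m$ homogeneous strips perpendicular to $e_1$, the $j$-th carrying $\vec q_j$, with integer widths tuned (as in Section \ref{sec:LDRWSP}, using $\sum_j\lambda_j\,\rmd(\tilde{\vec q}_j)=0$ for the drifts $\rmd(\tilde{\vec q}_j)$ of the $\theta_\ast$-tilted strip environments $\tilde{\vec q}_j$) so that the $\theta_\ast$-tilted RWPE has velocity as small as we like. By the transfer-operator description of the RWPE rate function supplied by Sections \ref{sec:LDRWTP}--\ref{sec:LDRWSP}, $I_{\n_\varepsilon,\p_\varepsilon}(0)=-\log\inf_\theta\rho(\theta)$ with $\rho(\theta)$ the Perron eigenvalue of the tilted cyclic transfer operator on the strip torus; at $\theta=\theta_\ast$ this operator has all of its row sums equal to $\rme^{-I(0)}$, so $\rho(\theta_\ast)=\rme^{-I(0)}$, while the tilted RWPE having small velocity makes $\theta_\ast$ a near-critical point of the convex, locally strongly convex map $\theta\mapsto\log\rho(\theta)$, whence $\inf_\theta\log\rho$ stays within $\varepsilon$ of $\log\rho(\theta_\ast)=-I(0)$, i.e.\ $|I_{\n_\varepsilon,\p_\varepsilon}(0)-I(0)|\le\varepsilon$. (If $\vec p_\ast\in\cS(\bP)$ one may instead take the homogeneous period $((1,\dots,1),\vec p_\ast)$, for which $I_{\n_\varepsilon,\p_\varepsilon}(0)=I(0)$ exactly.)

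\textbf{Part (iii), Borel--Cantelli and upper bound.} The statement \eqref{bccl} I would derive from the independence of $(\omega(x))_{x}$ together with $\vec q_j\in\cS(\bP)$: a ball of radius $\delta(\log N)^{1/d}$ carries a translate of $\omega_{\n_\varepsilon,\p_\varepsilon}$ up to $\ell^{\infty}$-error $\varepsilon$ with probability at least $N^{-c\delta^{d}}$ for some $c>0$, and there are polynomially-in-$N$ many essentially independent balls available near the origin, so for $\delta_0$ small enough the failure probabilities are summable in $N$. The rightmost inequality in the displayed chain is then immediate: $P_{0,\omega}(X_N=0)=P_{0,\omega}(X_N/N\in\{0\})$, so the quenched LDP of \cite{V03} combined with $I_q(0)=I(0)$ from part (i) gives $\limsup_{N}\tfrac1N\log P_{0,\omega}(X_N=0)\le-I(0)$; and the middle inequality holds since $A^{\varepsilon}_{N,\n_\varepsilon,\p_\varepsilon}\subseteq\{X_N=0\}$.

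\textbf{Part (iii), lower bound.} On the almost surely eventual event that some ball $\rmB(z,\delta(\log N)^{1/d})$ within distance $a_N$ of the origin carries an $\varepsilon$-translate of $\omega_{\n_\varepsilon,\p_\varepsilon}$, I would bound $P_{0,\omega}(A^{\varepsilon}_{N,\n_\varepsilon,\p_\varepsilon})$ from below, via the Markov property, by the product of: the probability of a nearly straight length-$a_N$ path from $0$ to $z$; the probability that the walk started at $z$ makes a length-$M$ loop at $z$ while staying in that ball, $M:=N-2a_N$; and the probability of the length-$a_N$ return path. The outer factors are $\ge\kappa^{a_N}=\rme^{-o(N)}$ by uniform ellipticity. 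For the middle one I would tilt by $\theta_\ast$: un-tilting the loop produces the deterministic weight $\prod_{k<M}\mu_{\omega(X_k)}(\theta_\ast)$, and since the environment in the ball is within $\varepsilon$ of $\omega_{\n_\varepsilon,\p_\varepsilon}$, on which $\mu_{\omega_{\n_\varepsilon,\p_\varepsilon}(x)}(\theta_\ast)=\rme^{-I(0)}$ for all $x$, each factor is at least $\rme^{-I(0)-C\varepsilon}$, so this weight is at least $\rme^{-M(I(0)+C\varepsilon)}$; it then remains to bound below the probability that the $\theta_\ast$-tilted walk — essentially recurrent inside the ball, the tilted strip drifts averaging to zero — makes a length-$M$ loop at $z$ confined to radius $R_N:=\delta(\log N)^{1/d}$, and this is at least of order $\exp(-c'M/R_N^{2}-C'\varepsilon^{2}M)\,R_N^{-d}$: a diffusive confinement cost (negligible since $M/R_N^{2}\asymp N/(\log N)^{2/d}=o(N)$), an $O(\varepsilon^{2})$-per-time cost from the residual bias of the perturbation, and a polynomial return cost from a local central limit theorem for the confined tilted walk. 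Collecting these gives $\liminf_{N}\tfrac1N\log P_{0,\omega}(A^{\varepsilon}_{N,\n_\varepsilon,\p_\varepsilon})\ge-(I(0)+C\varepsilon+C'\varepsilon^{2})$, and relabeling $\varepsilon$ finishes. I expect the main obstacle to be exactly this middle estimate — controlling the tilted loop probability simultaneously under the $\varepsilon$-perturbation and the confinement to the slowly growing ball, where the reduction of space-periodic to time-periodic walks in Sections \ref{sec:LDRWTP}--\ref{sec:LDRWSP} and the RWRE local central limit theorems do the real work; the tuning of the strip widths in part (ii) is delicate but of lower order.
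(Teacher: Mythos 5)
Your parts (i) and (iii) track the paper closely: the minimax reduction and the first-order/Lagrange argument for the boundary claim are essentially Lemma~\ref{lem:boundary}, the Borel--Cantelli count for \eqref{bccl} is the intended one, and the lower-bound scheme in (iii) (reach the ball in $a_N$ steps at cost $\kappa^{a_N}=\rme^{-o(N)}$, tilt by $\theta_*$, un-tilt a loop, control the confined return at sub-exponential cost) is the paper's; the confined-loop estimate you flag as the main obstacle is handled there by decomposing the loop into returns to the center every $L_N=2\lfloor\delta(\log N)^{1/d}\rfloor$ steps and invoking Takenami's local CLT for RWPE, yielding $\rme^{-CN\log\log N/(\log N)^{1/d}}$. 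The genuine gaps are in part (ii). First, the strip orientation: for the occupation-time tuning of Lemma~\ref{l32} to work, the projection of the walk onto the strip normal $u$ must be a martingale, which requires $\langle \rmd(\tilde{\vec q}_j),u\rangle=0$ for \emph{every} $j$ individually; your condition $\sum_j\lambda_j\,\rmd(\tilde{\vec q}_j)=0$ is strictly weaker, and $u=e_1$ is not admissible in general (a strip whose tilted drift has a component along the normal acts as a trap or a repeller and the occupation fractions can no longer be set by adjusting widths). The existence of a nonzero rational $u$ orthogonal to all the tilted drifts is exactly where the non-nestling/marginally nestling hypothesis enters: via Lemma~\ref{lem:boundary} one reduces to at most $d$ support points whose tilted drifts have $0$ in their convex hull, hence span a subspace of dimension at most $d-1$. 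Carath\'eodory in $\cP_\bV\subseteq\bR^{2d}$ alone can produce up to $2d$ points, whose drifts may span all of $\bR^d$, so this reduction cannot be skipped.

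Second, your route to $|I_{\n_\varepsilon,\p_\varepsilon}(0)-I(0)|\le\varepsilon$ rests on two unproved inputs: the identification $I_{\n,\p}(0)=-\inf_\theta\log\rho(\theta)$ with $\rho$ a Perron eigenvalue of a tilted transfer operator (not supplied by Sections~\ref{sec:LDRWTP}--\ref{sec:LDRWSP}, which never construct such an operator), and, more seriously, a strong-convexity lower bound for $\theta\mapsto\log\rho(\theta)$ near its minimizer that is \emph{uniform over the periods you construct} (whose sizes blow up as $\varepsilon\to0$). Convexity plus a small gradient at $\theta_*$ gives $\inf_\theta\log\rho\ge\log\rho(\theta_*)-|\nabla\log\rho(\theta_*)|^2/(2\mu)$ only when such a uniform modulus $\mu>0$ is available, and establishing it amounts to a uniform lower bound on the effective diffusivity of the tilted RWPE --- real work that is nowhere done. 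The paper avoids this entirely: in Proposition~\ref{exrwpe} the upper bound is the Chebyshev bound of Step~0, and the lower bound is obtained by bounding $P_{0,\omega_{\n,\p}}(|X_n|\le n\varepsilon)$ directly through the Radon--Nikodym tilting together with the law of large numbers for the strip occupation times (Lemma~\ref{l32}) and the events $A_n^{(\varepsilon)},B_n^{(\varepsilon)}$, and then passes from $\{|X_n|\le n\varepsilon\}$ to $\{X_n=0\}$ using equicontinuity of the periodic rate functions on compact subsets of $\mathrm{D}^\circ$; the saddle-point value $\overline{\Lambda}(\theta^*)=-\inf_{\sigma\in\cK_\cS}I_\sigma(0)$ is identified via Lemma~\ref{lem:saddle} and Proposition~\ref{prop31} rather than via a spectral formula. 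You should either supply the two missing spectral/convexity inputs or adopt this occupation-time argument.
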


\begin{remark} Part (i) of Theorem~\ref{theorem1} was originally proven for dimension $d=1$ by Comets, Gantert and Zeitouni in \cite{CGZ00} and for any dimension by
  Varadhan in~\cite{V03}. We include it in the statement of the theorem for completeness.
  
  \end{remark}

  \begin{remark} The smallness of $\delta_0$ in part $(iii)$ of Theorem \ref{theorem1} depends on the dimension $d$ and on $\varepsilon$ through  the probability per unit volume of realizing the periodic environment $\omega_{\bf n_\varepsilon, \bf p_\varepsilon}$.
  \end{remark}
  
\begin{remark} A statement analogous to part (iii) in the case of nestling random walks was proven by Zerner in \cite{Z98}. In that case $I(0)=0$ and essentially a good strategy for the random walk to realize the event $\{X_n=0\}$ is to move in time $O(N/\log N)$ from $0$ to a ball of radius $O((\log N)^{1/d})$ within a distance $N/\log N$ where the environment is constructed to produce a trap (a {\it nest} in the language of \cite{Z98}), stay there a time of order $O(N)$, and come back to $0$ again in a time of order $O(N/\log N)$.
\end{remark}

\begin{remark} In part $(iii)$ of Theorem \ref{theorem1}, the
periodic environment can be chosen as alternating parallel strips perpendicular to the velocity of the RWRE, where each strip is composed of a homogeneous environment so that the jump probabilities are all equal to some given element of the support of $\omega(0)$ (see Figures \ref{figure1sb} and \ref{figure-tiling}).
\end{remark}

\section{Large deviations for RWPE}

In this section, we derive an explicit formula for the large deviation rate
function at the origin of random walks in (space-)periodic environments. As a first step, we shall study the analogous problem for random walks in time-periodic environments.

\subsection{Large deviations for random walks in time-periodic 
  environments} \label{sec:LDRWTP}
  
 Throughout this section we will work with random walks on $\bZ^d$ evolving according to a time-periodic jump rule. More precisely, let us fix $k \in \bN$ and a $k$-tuple $\q:=(\vec{q}_0,\dots,\vec{q}_{k-1}) \in (\cP_\bV)^k$ of (not necessarily different) probability vectors and define the sequence of environments $\omega=(\omega_n)_{n \in \bN_0} \subseteq \Omega$ by the formula
\[
\omega_n(x,e):=q_{n \,\text{mod}\, k}(e) \qquad \text{ for each $x \in \bZ^d$, $e \in \bV$}.
\] 
We then define the {\it random walk in the time-periodic environment $\omega$} (RWTPE)
starting from $x \in \bZ^d$ as the (time inhomogeneous) Markov chain $(X_n)_{n \in \bN_0}$ with law $Q_{x,\q}$ given by $Q_{x,\q}(X_0=x)=1$ and, for each $n \in \bN_0$, $y \in \bZ^d$ and $e \in \bV$, 
\[
Q_{x,\q}(X_{n+1}=y+e|X_n=y)=\omega_n(y,e).
\] For convenience, we shall sometimes call $(X_n)_{n \in \bN_0}$ a $\q$-RWTPE.  We define the {\it support} of the 
$\q$-RWTPE as the set $\cS$ consisting of the different elements in the finite sequence $\vec{q}_0,\dots,\vec{q}_{k-1}$, and we enumerate its different~elements by $\sigma_1,\dots,\sigma_{j}$ so that we can write
\begin{equation} \label{eq:SDef}
\cS=\{\sigma_1,\ldots,\sigma_{j}\},
\end{equation} where we note that   $\sigma_i \neq \sigma_{i'}$ for all $i\neq i'$. For each $i=1,\dots,j$, we define the {\it multiplicity} of $\sigma_i$ as the number of times $k_i$
that $\sigma_i$ appears in the $k$-tuple $\q$ and the \textit{frequency} of $\sigma_i$ as $\lambda_{i}:=\frac{k_i}{k}$. Note that
\[
k_1+\cdots+k_{j}=k
\] 
and $\overline{\lambda}:=(\lambda_1,\dots,\lambda_{j})\in\Delta_j \cap \bQ^j$, where $\Delta_j$ denotes the $j$-simplex 
\begin{equation}
\label{simplexj}
\Delta_{j}:=\Big\{(t_1,\dots,t_{j}) \in [0,\infty)^{j} :\sum_{i=1}^{j} t_i=1\Big\}.
\end{equation}
Given a vector
$\overline{\lambda}=(\lambda_1,\ldots,\lambda_{j}) \in \Delta_{j} \in \bQ^{j}$ as above,
we will say that the RWTPE has \textit{frequency}~$\overline{\lambda}.$ 
Observe that when $\cS=\{\sigma\}$ is a singleton, the resulting RWTPE is merely a simple random walk on $\bZ^d$ with jump distribution $\sigma$, or $\sigma$-RW, for short. Recall that, if we consider the logarithmic moment generating function $\Lambda_\sigma : \bR^d \to \bR$ given by the formula
\begin{equation}\label{eq:lgf1}
\Lambda_\sigma(\theta):= \log \Big(\sum_{e \in \bV} \mathrm{e}^{\langle\theta,e\rangle}\sigma(e)\Big) 
\end{equation} for any $\theta \in \bR^d$, then, by Cram\'er's theorem, the $\sigma$-RW satisfies a large deviation principle with rate function~$I_\sigma$ given for each $x \in \bR^d$ by the formula
\[
I_\sigma(x):=\sup_{\theta \in \bR^d} \big(\langle \theta, x\rangle - \Lambda_\sigma(\theta)\big).
\] In particular, it is straightforward to verify that, for any \textit{non-degenerate} $\sigma \in \cP_{\bV}$, i.e., such that $\sigma(e) > 0$ for all $e \in \bV$, one has that \begin{equation}\label{eq:rateat0}
I_\sigma(0):= -\inf_{\theta \in \bR^d}\Lambda_\sigma(\theta)=-\log \Big( \sum_{e \in \bV} \sqrt{ \sigma(e)\sigma(-e)}\Big),
\end{equation} since in this case $\Lambda_{\sigma}$ is finite, continuous and satisfies $\lim_{|\theta| \to \infty} \Lambda_{\sigma}(\theta)=+\infty$, so that it must attain its global minimum at its unique critical point (which can be immediately computed and thus yields \eqref{eq:rateat0}). Furthermore, if $(X_n)_{n \in \bN_0}$ is any $\q$-RWTPE with support $\cS=\{\sigma_1,\dots,\sigma_j\}$ and frequency $\overline{\lambda}=(\lambda_1,\dots,\lambda_j)$, then a simple computation gives that the limit
\[
\Lambda_\q (\theta):=\lim_{n\to\infty}\frac{1}{n}\log 
Q_{0,\q}\big[\rme^{\langle \theta, X_n\rangle}\big]=\sum_{i=1}^j \lambda_i \Lambda_{\sigma_i}(\theta)
\] exists and is finite for every $\theta \in \bR^d$, where $Q_{0,\q}[X]$ denotes the expectation of $X$ with respect to the probability measure $Q_{0,\q}$. Furthermore,  $\Lambda_\q$ is everywhere differentiable as a function  of $\theta$. It then follows from the G\"artner-Ellis theorem \cite[Exercise~2.3.20]{DZ98}, that the $\q$-RWTPE satisfies a large deviation principle with rate function $I_\q$ given by
\[
I_\q(x):=\sup_{\theta \in \bR^d} \big(\langle \theta, x\rangle - \Lambda_\q(\theta)\big).
\] 
Recalling the notation $\cP_\bV^{(\kappa)}$ from \eqref{eq:Pkappa}, we are now ready to state the main result of this section.

\begin{proposition}
\label{prop31}
Fix $\kappa > 0$ and a nonempty set $\mathrm{A} \subseteq \cP_\bV^{(\kappa)}$ of probability vectors. Then, for each $\varepsilon > 0$ there exists some period $\q_\varepsilon$ such that the corresponding $\q_\varepsilon$-RWTPE satisfies 
\[
\left|I_{\q_\varepsilon}(0)- \inf_{\sigma\in 
    {\mathcal K}_{\mathrm{A}}}I_{\sigma}(0)\right|\le\varepsilon. 
\]
\end{proposition}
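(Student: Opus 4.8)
The plan is to establish the two bounds $I_{\q_\varepsilon}(0)\ge\inf_{\sigma\in\cK_{\mathrm A}}I_{\sigma}(0)$ for \emph{every} period $\q_\varepsilon$ whose support is contained in $\mathrm A$, and $I_{\q_\varepsilon}(0)\le\inf_{\sigma\in\cK_{\mathrm A}}I_{\sigma}(0)+\varepsilon$ for a suitably chosen such period, relying throughout on the identity $I_\q(0)=-\inf_{\theta}\Lambda_\q(\theta)$ with $\Lambda_\q=\sum_i\lambda_i\Lambda_{\sigma_i}$ for a $\q$-RWTPE of support $\{\sigma_1,\dots,\sigma_j\}$ and frequencies $(\lambda_i)$. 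I would record first two harmless preliminaries. Uniform ellipticity gives $\Lambda_\sigma(\theta)\ge\log\kappa+|\theta|_\infty$ for all $\sigma\in\cP_\bV^{(\kappa)}$, while $\Lambda_\sigma(0)=0$; consequently every $\theta$-minimization appearing below is attained inside the fixed compact ball $B_\kappa:=\{\theta:|\theta|_\infty\le\log(1/\kappa)+1\}$. Moreover $(\sigma,\theta)\mapsto\Lambda_\sigma(\theta)$ is uniformly continuous on $\cP_\bV^{(\kappa)}\times B_\kappa$, and each $\Lambda_\sigma$ is smooth and strictly convex (its Hessian is the covariance matrix of the jump under the $\theta$-tilted law, positive definite since $\bV$ affinely spans $\bR^d$ and $\sigma(e)\ge\kappa$). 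The second preliminary is Jensen's inequality for $\log$: $\sum_i\lambda_i\Lambda_{\sigma_i}(\theta)\le\Lambda_{\sum_i\lambda_i\sigma_i}(\theta)$ for every probability vector $(\lambda_i)$, with equality at any $\theta$ at which all the numbers $\Lambda_{\sigma_i}(\theta)$ coincide. The easy bound is then immediate: if the support of $\q_\varepsilon$ lies in $\mathrm A$ then $\sum_i\lambda_i\sigma_i\in\cK_{\mathrm A}$, so $\inf_\theta\Lambda_{\q_\varepsilon}\le\inf_\theta\Lambda_{\sum_i\lambda_i\sigma_i}$ and hence $I_{\q_\varepsilon}(0)\ge I_{\sum_i\lambda_i\sigma_i}(0)\ge\inf_{\sigma\in\cK_{\mathrm A}}I_{\sigma}(0)$.

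For the upper bound I would let $\overline{\cK}$ denote the closure of $\cK_{\mathrm A}$, a compact convex subset of $\cP_\bV^{(\kappa)}$, and note that by continuity of $\sigma\mapsto I_\sigma(0)$ one has $\inf_{\sigma\in\cK_{\mathrm A}}I_\sigma(0)=\min_{\sigma\in\overline{\cK}}I_\sigma(0)=-m$ with $m:=\max_{\sigma\in\overline{\cK}}\inf_\theta\Lambda_\sigma(\theta)$. Fix $\sigma^*\in\overline{\cK}$ attaining this maximum and let $\theta^*$ be the unique minimizer of $\Lambda_{\sigma^*}$, so that $\Lambda_{\sigma^*}(\theta^*)=m$ and, $\theta^*$ being interior to $B_\kappa$, $\nabla\Lambda_{\sigma^*}(\theta^*)=0$. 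The decisive point is that $\sigma^*$ also maximizes the \emph{linear} functional $L_{\theta^*}(\sigma):=\sum_{e\in\bV}\rme^{\langle\theta^*,e\rangle}\sigma(e)=\rme^{\Lambda_\sigma(\theta^*)}$ over $\overline{\cK}$: for any $\sigma'\in\overline{\cK}$ the function $t\mapsto\inf_\theta\Lambda_{(1-t)\sigma^*+t\sigma'}(\theta)$ is concave on $[0,1]$ and maximal at $t=0$, so its right derivative there is $\le0$, and by Danskin's theorem (the inner minimizer being the single point $\theta^*$) that derivative equals $\big(L_{\theta^*}(\sigma')-L_{\theta^*}(\sigma^*)\big)/L_{\theta^*}(\sigma^*)$; alternatively this is Sion's minimax theorem applied to $(\sigma,\theta)\mapsto\Lambda_\sigma(\theta)$ on $\overline{\cK}\times B_\kappa$. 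Writing $\sigma^*$ as a convex combination $\sigma^*=\sum_{i=1}^{J}\mu_i\widetilde\sigma_i$ of finitely many extreme points of the exposed face of $\overline{\cK}$ on which $L_{\theta^*}$ is maximal (Carath\'eodory), Milman's theorem places each $\widetilde\sigma_i$ in the closure $\overline{\mathrm A}$, and by construction $\Lambda_{\widetilde\sigma_i}(\theta^*)=\log L_{\theta^*}(\widetilde\sigma_i)=m$ for every $i$.

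Finally I would set $g:=\sum_{i=1}^{J}\mu_i\Lambda_{\widetilde\sigma_i}$. By the Jensen preliminary $g\le\Lambda_{\sigma^*}$ everywhere, with equality at $\theta^*$ since all the $\Lambda_{\widetilde\sigma_i}(\theta^*)$ equal $m$; hence $\theta^*$ is an interior maximizer of the smooth function $g-\Lambda_{\sigma^*}$, so $\nabla g(\theta^*)=\nabla\Lambda_{\sigma^*}(\theta^*)=0$, and convexity of $g$ then forces $\inf_\theta g(\theta)=g(\theta^*)=m$. To turn this into a genuine period I would approximate each $\widetilde\sigma_i\in\overline{\mathrm A}$ by some $\sigma_i\in\mathrm A$ and each $\mu_i$ by a rational $\lambda_i=k_i/k$, all close enough that, using the uniform continuity on $\cP_\bV^{(\kappa)}\times B_\kappa$ together with the coercivity bound (which keeps the minimizer of $\widetilde g:=\sum_i\lambda_i\Lambda_{\sigma_i}$ inside $B_\kappa$), one gets $|\inf_\theta\widetilde g-m|\le\varepsilon$. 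Choosing $\q_\varepsilon$ to be any ordering of the $k$-tuple containing each $\sigma_i$ with multiplicity $k_i$ yields $\Lambda_{\q_\varepsilon}=\widetilde g$, whence $I_{\q_\varepsilon}(0)=-\inf_\theta\widetilde g\le-m+\varepsilon=\inf_{\sigma\in\cK_{\mathrm A}}I_\sigma(0)+\varepsilon$, which together with the easy bound of the first paragraph proves the proposition. I expect the main obstacle to be the second paragraph: extracting from the optimal $\sigma^*$ a finite family of (near-)elements of $\mathrm A$ on which $\Lambda_\cdot(\theta^*)$ is constant and whose convex combination is $\sigma^*$ — this is where the minimax / envelope-theorem input genuinely enters; the coercivity and uniform-continuity estimates needed to descend to a rational period are routine.
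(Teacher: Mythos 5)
Your proof is correct, and its engine is the same as the paper's: locate a saddle point $(\sigma^*,\theta^*)$ of $(\sigma,\theta)\mapsto\Lambda_\sigma(\theta)$, observe that the individual log-moment generating functions all take the common value $m$ at $\theta^*$ on the support of the optimal convex weights, conclude via the vanishing gradient and convexity that the \emph{arithmetic} mean $\sum_i\mu_i\Lambda_{\widetilde\sigma_i}$ has the same infimum $m$ as $\Lambda_{\sigma^*}$, and finally pass to rational frequencies. Where you genuinely diverge is in how the finite family is produced. The paper first replaces $\mathrm{A}$ by a finite subset $\cS$ with $|\inf_{\cK_{\mathrm A}}I_\sigma(0)-\inf_{\cK_\cS}I_\sigma(0)|\le\varepsilon/2$ and then runs Sion's minimax on the finite-dimensional simplex $\Delta_j\times\bR^d$ (its Lemma~\ref{lem:saddle}); the equality of the $\Lambda_{\sigma_i}(\theta^*)$ for $t_i^*>0$ is then read off from the saddle inequality by perturbing $t^*$. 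You instead optimize over the full closed hull $\overline{\cK}$, identify the exposed face of $\overline{\cK}$ on which the linear functional $L_{\theta^*}$ is maximal (via Danskin, or equivalently via the same Sion/Bertsekas saddle-point argument the paper uses on $\Delta_j$, applied to $\overline{\cK}$), and extract the finite family by Carath\'eodory plus Milman, which places the $\widetilde\sigma_i$ only in $\overline{\mathrm A}$ and so forces one extra approximation step back into $\mathrm A$. Your route buys an exact lower bound $I_{\q}(0)\ge\inf_{\cK_{\mathrm A}}I_\sigma(0)$ for every admissible period (a clean consequence of Jensen that the paper does not isolate) and makes the geometric meaning of the optimal support transparent (extreme points of an exposed face); the paper's route avoids Milman/Carath\'eodory entirely and keeps everything finite-dimensional from the outset, at the cost of an upfront $\varepsilon/2$ discretization. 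Both are complete; the only step in yours that needs the most care is the Danskin/envelope identification of the one-sided derivative of $t\mapsto\inf_\theta\Lambda_{(1-t)\sigma^*+t\sigma'}(\theta)$ at $t=0^+$, for which uniqueness of the inner minimizer (strict convexity of $\Lambda_{\sigma^*}$, guaranteed by uniform ellipticity) is exactly the needed hypothesis — or one can simply quote the saddle-point existence as in the paper's Lemma~\ref{lem:saddle}.
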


For the proof of Proposition~\ref{prop31} we will need the following lemma (recall $\Delta_j$ is the $j$-dimensional simplex, cf.\ \eqref{simplexj}).

\begin{lemma}\label{lem:saddle} Fix $\kappa > 0$ and let $\sigma_1,\dots,\sigma_j \in \cP_\bV^{(\kappa)}$ be all distinct. Define the mapping  $\Lambda : \Delta_j \times \bR^d \to \R$ by the formula
\[
\Lambda(t,\theta) := \Lambda_{\langle t , \sigma\rangle}(\theta)=\log \Big(  \sum_{i=1}^j t_i \sum_{e \in \bV} \rme^{\langle \theta,e\rangle} \sigma_i(e)\Big),
\] where $\langle t , \sigma \rangle:=\sum_{i=1}^j t_i \sigma_i \in \cP_{\bV}$ for $t \in \Delta_j$. Then, $\Lambda$ has at least one saddle point, by which we refer to the (global) property that there exists some $(t^*,\theta^*) \in \Delta_j \times \bR^d$ such that $\Lambda(t,\theta^*) \leq \Lambda(t^*,\theta^*) \leq \Lambda(t^*,\theta)$ for all $t \in \Delta_j$, $\theta \in \bR^d$. Observe that such $(t^*,\theta^*)$ necessarily
satisfies
\[
\Lambda(t^*,\theta^*) = \max_{t \in \Delta_j} \inf_{\theta \in \bR^d} \Lambda(t,\theta) = \inf_{\theta \in \bR^d} \max_{t \in \Delta_j} \Lambda(t,\theta).
\] 
\end{lemma}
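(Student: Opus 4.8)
The plan is to verify the hypotheses of a minimax/saddle-point theorem (e.g.\ Sion's minimax theorem) for the function $\Lambda$ on $\Delta_j \times \bR^d$, after first reducing the non-compact $\theta$-variable to a compact set. First I would record the basic convexity/concavity structure: for each fixed $t \in \Delta_j$, the map $\theta \mapsto \Lambda(t,\theta) = \log\big(\sum_{i} t_i \sum_e \rme^{\langle\theta,e\rangle}\sigma_i(e)\big) = \Lambda_{\langle t,\sigma\rangle}(\theta)$ is the logarithmic moment generating function of the probability vector $\langle t,\sigma\rangle$, hence convex in $\theta$ (this is classical; it also follows from Hölder). For each fixed $\theta \in \bR^d$, the map $t \mapsto \Lambda(t,\theta)$ is the composition of the concave function $\log$ with the \emph{affine} map $t \mapsto \sum_i t_i \sum_e \rme^{\langle\theta,e\rangle}\sigma_i(e)$, hence concave (indeed, $\Delta_j \ni t \mapsto \sum_i t_i c_i$ is linear for any constants $c_i$). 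Both maps are continuous. Since $\Delta_j$ is compact and convex, this already gives $\sup_{t}\inf_{\theta}\Lambda = \inf_{\theta}\sup_{t}\Lambda$ once we also handle compactness in $\theta$, but to get an actual saddle point realized at a specific $(t^*,\theta^*)$ I need the inner infimum to be attained.

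The key technical step is a uniform coercivity estimate in $\theta$ that uses uniform ellipticity. Because each $\sigma_i \in \cP_\bV^{(\kappa)}$, we have $\langle t,\sigma\rangle(e) = \sum_i t_i \sigma_i(e) \ge \kappa$ for every $e \in \bV$ and every $t \in \Delta_j$. Hence for any $t$ and any $\theta$,
\[
\Lambda(t,\theta) \;\ge\; \log\Big(\kappa \sum_{e \in \bV} \rme^{\langle\theta,e\rangle}\Big) \;\ge\; \log\kappa + \log\big(\rme^{\langle\theta, e_0\rangle}\big) \;\ge\; \log\kappa + |\theta|_\infty \cdot \text{(sign choice)},
\]
and more usefully, picking $e \in \bV$ with $\langle\theta,e\rangle = |\theta|_\infty$ gives $\Lambda(t,\theta) \ge \log\kappa + |\theta|_\infty$. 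Therefore $\inf_{\theta}\Lambda(t,\theta) = \inf_{\theta \in K}\Lambda(t,\theta)$ for a fixed compact ball $K = \{|\theta|_\infty \le R\}$ with $R := -\log\kappa$ (any $\theta$ outside $K$ has $\Lambda(t,\theta) \ge 0 \ge \Lambda(t,0)$), uniformly over $t \in \Delta_j$. Replacing $\bR^d$ by $K$ throughout, $\Lambda$ is now a continuous function on the compact convex set $\Delta_j \times K$, concave in $t$ and convex in $\theta$, so Sion's minimax theorem (or von Neumann's, since we have convexity/concavity not merely quasi-) applies and yields $(t^*,\theta^*) \in \Delta_j \times K$ with
\[
\Lambda(t^*,\theta^*) = \max_{t \in \Delta_j}\min_{\theta \in K}\Lambda(t,\theta) = \min_{\theta \in K}\max_{t \in \Delta_j}\Lambda(t,\theta),
\]
together with the saddle inequalities $\Lambda(t,\theta^*) \le \Lambda(t^*,\theta^*) \le \Lambda(t^*,\theta)$ for all $t \in \Delta_j$, $\theta \in K$. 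Finally I would upgrade the $\theta$-range from $K$ back to all of $\bR^d$: for $\theta \notin K$, $\Lambda(t^*,\theta) \ge 0 \ge \Lambda(t^*, 0) \ge \min_{\theta \in K}\Lambda(t^*,\theta) = \Lambda(t^*,\theta^*)$ (using that $\Lambda(t^*,0) = \log 1 = 0$ and $\theta^*$ is the $K$-minimizer for $t^*$), so the right-hand saddle inequality persists on all of $\bR^d$; the left-hand one is unchanged since it does not involve $\theta$. The displayed identity $\Lambda(t^*,\theta^*) = \max_t \inf_\theta \Lambda(t,\theta) = \inf_\theta \max_t \Lambda(t,\theta)$ then follows from the saddle property by the standard argument (the saddle value dominates $\sup_t\inf_\theta$ and is dominated by $\inf_\theta\sup_t$, while $\sup_t\inf_\theta \le \inf_\theta\sup_t$ always).

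The main obstacle is purely the non-compactness of $\bR^d$: a saddle point need not exist for a convex-concave function on a non-compact domain, so the coercivity bound from uniform ellipticity is essential and must be made clean enough that the restriction to $K$ does not disturb either inequality. One minor point to be careful about is that after restricting to $K$ the function is still jointly continuous and the convexity in $\theta$ is genuine (not just on $K$, but restricting a convex function to a convex set keeps it convex), so Sion's hypotheses hold verbatim; and I should note explicitly that $\Lambda$ is finite everywhere on $\Delta_j \times \bR^d$ since $\bV$ is finite, so there are no $\pm\infty$ issues. No heavy computation is needed — the proof is a structural argument plus the one-line ellipticity estimate.
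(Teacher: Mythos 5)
Your proof is correct and follows essentially the same route as the paper's: joint continuity plus concavity in $t$ and convexity in $\theta$, Sion's minimax theorem, and coercivity of $\Lambda$ in $\theta$ (via uniform ellipticity) to guarantee attainment. The only difference is organizational --- you compactify the $\theta$-domain to a ball $K$ \emph{before} invoking the minimax theorem and then extend the saddle inequality back to all of $\bR^d$, whereas the paper applies Sion directly on $\Delta_j \times \bR^d$, separately checks that the outer optimizer sets $X_*$ and $Y_*$ are non-empty, and cites the standard fact that any point of $X_* \times Y_*$ is then a saddle point.
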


\begin{proof} By  direct computation, it is straightforward to verify that:
\begin{enumerate}
\item [$\bullet$] $(t,\theta) \mapsto \Lambda(t,\theta)$ is continuous on $\Delta_j \times \bR^d$,
\item [$\bullet$] $t \mapsto \Lambda(t,\theta)$ is concave for all $\theta \in \bR^d$, 
\item [$\bullet$] $\theta \mapsto \Lambda(t,\theta)$ is convex for all $t \in \Delta_j$.
\end{enumerate} Therefore, by Sion's minimax theorem \cite{Si58}, it follows that the minimax equality holds, i.e.,
\[
\max_{t \in \Delta_j} \inf_{\theta \in \bR^d} \Lambda(t,\theta) = \inf_{\theta \in \bR^d} \max_{t \in \Delta_j} \Lambda(t,\theta).
\] Now let us define the function $R_1 : \bR^d \to \bR$ via
\[
R_1(\theta):= \max_{t \in \Delta_j} \Lambda(t, \theta)
\] 
and observe that
\[
R_1(\theta)= \log \Big( \max_{i=1,\dots,j} \sum_{e \in \bV} \rme^{\langle \theta,e\rangle} \sigma_i(e)\Big) = \max_{i=1,\dots,j} \Lambda_{\sigma_i}(\theta).
\] Hence, since it is straightforward to check from \eqref{eq:lgf1} that each $\Lambda_{\sigma_i}$ is finite, continuous and satisfies $\lim_{|\theta| \to \infty} \Lambda_{\sigma_i}(\theta)=+\infty$, it follows that $R_1$  satisfies these three properties as well and, as such, the set
\[
X_*:=\{ \theta \in \bR^d : R_1(\theta) = \inf R_1 \}
\] of minimizers is non-empty. 

On the other hand, if we consider the function $R_2:\Delta_j \to \R$ given by
\[
R_2(t):= \inf_{\theta \in \bR^d} \Lambda(t,\theta),
\] then, since $\Lambda(t,\theta)=\Lambda_{\langle t,\sigma\rangle}(\theta)$, by \eqref{eq:rateat0} we see that
\[
R_2(t)= \log \left( \sum_{e \in \bV} \sqrt{ \sum_{i,k=1}^j t_i t_k \sigma_i(e)\sigma_k(-e)}\right).
\] In particular, $R_2$ is finite and continuous, so that, since $\Delta_j$ is compact, we obtain that
\[
Y_*:=\{ t \in \Delta_j : R_2(t) = \sup R_2\} 
\] is non-empty. It now follows from \cite[Proposition 3.4.1]{B09} that any point in $X_* \times Y_*$ is a saddle point. 
\end{proof}

We are now ready to prove Proposition~\ref{prop31}.

\begin{proof}[Proof of Proposition~\ref{prop31}] We start by noting that given any $\varepsilon > 0$ we may choose $j \in \N$ and distinct $\sigma_1,\dots,\sigma_j \in \mathrm{A}$ such that, by definition of convex hull, 
\begin{equation} \label{eq:KAS}
\Big|\inf_{\sigma \in \mathcal{K}_{\mathrm{A}}} I_\sigma(0) - \inf_{\sigma \in \mathcal{K}_{\cS}} I_\sigma(0)\Big| \leq \frac{\varepsilon}{2},
\end{equation}
where we define $\cS:=\{\sigma_1,\ldots,\sigma_j\}$.

Next, let $\Lambda: \Delta_j \times \bR^d \to \R$ be the mapping from  Lemma~\ref{lem:saddle} (for this particular choice of $\sigma_1,\dots,\sigma_j$). Then, by this very same lemma, there exists $(t^*,\theta^*)$ such that
\begin{equation}\label{eq:saddle2}
\Lambda(t^*,\theta^*)=\inf_{\theta \in \bR^d} \max_{t \in \Delta_j} \Lambda(t,\theta)=\max_{t \in \Delta_j}\inf_{\theta \in \bR^d}  \Lambda(t,\theta).
\end{equation} In particular, upon noticing that $-I_{\langle t,\vec{\sigma}\rangle}(0)=\inf_{\theta \in \bR^d} \Lambda(t,\theta)$, we see that
\[\Lambda(t^*,\theta^*)=-\min_{t \in \Delta_j} I_{\langle t , \vec{\sigma}\rangle} (0) = - \inf_{\sigma \in \mathcal{K}_{\cS}}I_\sigma(0). 
\]
Now, if we write $t^*=(t_1^*,\dots,t_j^*)$ and define $\Lambda^*_{\vec{\sigma}}:\bR^d \to \R$ by the formula
\begin{equation}\label{eq:expcomb}
\Lambda^*_{\vec{\sigma}}(\theta):=\sum_{i=1}^j t_i^* \Lambda_{\sigma_i}(\theta),
\end{equation} then our next step will be to show that $\Lambda^*_{\vec{\sigma}}$ has a global minimum at $\theta^*$ which satisfies $\Lambda^*_{\vec{\sigma}}(\theta^*)=\Lambda(t^*,\theta^*)$, so that
\begin{equation}\label{eq:equal}
\min_{\theta \in \bR^d} \Lambda^*_{\vec{\sigma}}(\theta) = - \inf_{\sigma \in \mathcal{K}_{\cS}} I_\sigma(0).
\end{equation} To this end, we first observe that, $\Lambda_{\sigma_i}(\theta^*)=\max_{k=1,\dots,j}\Lambda_{\sigma_{k}}(\theta^*)$ for all $i$ with $t^*_i> 0$. Indeed, since for all $t \in \Delta_j$ we have
\begin{equation}\label{eq:repexp}
\rme^{\Lambda(t,\theta^*)}=\sum_{i=1}^j t_i \rme^{\Lambda_{\sigma_i}(\theta^*)},
\end{equation} if there existed $i,k$ such that $t^*_i > 0$ and $\Lambda_{\sigma_i}(\theta^*) < \Lambda_{\sigma_k}(\theta^*)$ then this would imply that
\[
\Lambda( t^* + t^*_ie_k - t^*_ie_i, \theta^*) > \Lambda(t^*,\theta^*),
\] which would contradict the fact that $(t^*,\theta^*)$ is a saddle point. 
Thus, it follows from this observation and \eqref{eq:repexp} that, if we write $\overline{\Lambda}(\theta^*):=\max_{k=1,\dots,j} \Lambda_{\sigma_k}(\theta^*)$, then 
\[
\frac{\partial \Lambda^*_{\vec{\sigma}}(\theta)}{\partial \theta_k}\bigg|_{\theta=\theta^*} = \sum_{i=1}^j t_i^* \rme^{-\overline{\Lambda}(\theta^*)} \frac{\partial \rme^{\Lambda_{\sigma_i}(\theta)}}{\partial \theta_k}\bigg|_{\theta=\theta^*}=\rme^{-\overline{\Lambda}(\theta^*)}\frac{ \partial \rme^{\Lambda_{\langle t^*,\vec{\sigma}\rangle}(\theta^*)}}{\partial \theta_k}=0
\] since $(t^*,\theta^*)$ is a saddle point of $\Lambda$ and thus $\theta^*$ minimizes $\Lambda_{\langle t^* ,\vec{\sigma}\rangle}$. Arguing as with $R_1$ in the proof of Lemma~\ref{lem:saddle}, we can show that $\Lambda^*_{\vec{\sigma}}$ is finite, continuous and also satisfies $\lim_{|\theta| \to \infty} \Lambda^*_{\vec{\sigma}}(\theta)=\infty$, which implies that its critical point $\theta^*$ must be a global minimum. In addition, by this same observation and \eqref{eq:expcomb}, we obtain that $\Lambda^*_{\vec{\sigma}}(\theta^*)=\overline{\Lambda}(\theta^*)$. Finally, since we have that
\[
\Lambda(t^*,\theta^*)=\max_{t \in \Delta_j} \Lambda(t,\theta^*) = \overline{\Lambda}(\theta^*)
\] as in the proof of Lemma~\ref{lem:saddle}, we conclude that $\Lambda^*_{\vec{\sigma}}(\theta^*)=\Lambda(t^*,\theta^*)$ and hence \eqref{eq:equal} holds. 

To conclude the proof we observe that, since each $\Lambda_{\sigma_i}$ is finite, continuous and verifies both $\Lambda_{\sigma_i}(0)=0$ and $\lim_{|\theta| \to \infty} \Lambda_{\sigma_i}(\theta)=\infty$, it is straightforward to verify that, as $n \to \infty$, for any $t^{(n)}=(t_1^{(n)},\dots,t_j^{(n)}) \in \Delta_j$ with $t^{(n)} \rightarrow t^*$ we have
\[
\inf_{\theta \in \bR^d} \sum_{i=1}^j t_i^{(n)} \Lambda_{\sigma_i}(\theta) \longrightarrow
\inf_{\theta \in \bR^d} \sum_{i=1}^j t^*_i \Lambda_{\sigma_i}(\theta) = \Lambda^*_{\vec{\sigma}}(\theta^*)=-\inf_{\sigma \in \mathcal{K}_{\cS}}I_\sigma(0).
\] 
In particular, we see that for each sequence $(t^{(n)})_{n \in \N} \subseteq \Delta_j \cap \bQ^j$ with $t^{(n)} \rightarrow t^*$ as $n \to \infty$, we have that for any $\q_n$-RWTPE with support $\cS$ and frequency $t^{(n)}$,
\[
-I_{\q_n}(0)=\inf_{\theta \in \bR^d} \Lambda_{\q_n}(\theta)= \inf_{\theta \in \bR^d} \sum_{i=1}^j t^{(n)}_i\Lambda_{\sigma_i}(\theta) \longrightarrow -\inf_{\sigma \in \mathcal{K}_{\cS}}I_\sigma(0).
\] 
In combination with \eqref{eq:KAS}, the result now immediately follows.
\end{proof}

\subsection{Large deviations for random walks in space-periodic 
  environments} \label{sec:LDRWSP}

Consider a random walk $(X_n)_{n \in \bN_0}$ in the space-periodic environment induced by a period $(\n,\p)$, which we assume to be uniformly elliptic. Associated to this walk we can consider the Markov chain $(Y_n)_{n \in \bN_0}$
defined as
\[
Y_n:=X_n \,\text{mod}\, {\bf n}
\]
with finite state space $\mathbf{B}_{\bf n}$. The uniform ellipticity assumption entails that $(Y_n)_{n \in \bN_0}$ is irreducible. By choosing ${\bf n}$ with the correct parity, we can  also assume it is aperiodic. It therefore has a unique 
invariant measure on $\mathbf{B}_{\bf n}$ which we will denote by $\mu_{\n,\p}$.

In the sequel, we will need to construct a specific type of space-periodic environment, generated by different
environments on parallel strips. To this end, given  
$
u\in\mathbb Q^d \setminus \{0\}$ and $r_1,r_2 \in \bR$ with $r_1 < r_2$, we define the $(r_1,r_2)$-strip perpendicular to $u$ as
\[
S_{u,r_1,r_2}:=\{x\in\mathbb Z^d: r_1\le  \langle x, u \rangle <r_2\}.
\]
Next, we fix $r_0=0<r_1<\cdots<r_j$ and define a partition of $\mathbb Z^d$ given by the strips~$(S_i)_{i\in\mathbb Z}$, 
where $S_i:=S_{u,nr_j + r_{l-1},nr_j+r_l}$ 
if $i=nj+l$ with $l\in\{1,\ldots,j\}$. 
We call the partition $(S_i)_{i\in\mathbb Z}$ a {\it $(u;r_1,\dots,r_j)$-strip periodic tiling}. 

This tiling itself can be  decomposed into finite blocks or {\em tiles}, which are translated copies of a particular hyper-rectangle $R$ which we introduce next. We define recursively the set of vertices of one face
of $R$ as follows: let $x_0=0$ and, for $0\le n\le d-1$, we choose its $(n+1)$-th vertex as any point
\[
x_{n+1}\in{\rm argmin} \left\{|x|_2: x\in\mathbb Z^d: \langle u, x \rangle=0, \langle x_i, x\rangle =0, x\ne x_i \ {\rm for}\ 0\le i\le n\right\},
\]
and observe that the argmin above exists precisely because $u$ has rational coordinates.
We then define the \textit{base} $F$ of the hyper-rectangle $R$ as 
\begin{equation}
\label{basef}
F:=\Big\{x\in\mathbb R^d: \sum_{n=0}^{d-1} s_n x_n, 0\le s_n< 1, \sum_{n=0}^{d-1}s_n=1\Big\}.
\end{equation}
We now define the hyper-rectangle $R$ from its base $F$ via the formula
\begin{equation}
\label{bst}
R:=\big\{y\in\mathbb Z^d: y=x+\alpha u\,,\, x\in F\,,\, 0\le \alpha < r_j\big\}.
\end{equation}
We call $R$ the {\it basic tile} of the $(u;r_1,\dots,r_j)$-strip-periodic tiling $(S_i)_{i \in \bZ}$
(see Figure \ref{figure-tiling}). 

Finally, let us fix $u \in \bQ^d \setminus \{0\}$ and consider distinct vectors $\sigma_1,\ldots,\sigma_j\in\mathcal P_{\mathbb V}^{(\kappa)}$. Given a $(u;r_1,\dots,r_j)$-strip-periodic tiling, we define a (space-)periodic environment
on $\mathbb Z^d$ via
\[
\omega^*(x)=\sigma_{i \,\text{mod}\, j},
\]
whenever $x \in S_i$. We call this a {\it $(u;r_1,\dots,r_j)$-strip periodic environment} generated by $\sigma_1,\ldots,\sigma_j$. Observe that we can write $\omega^*=\omega_{\n,\p}$ for some specific period $(\n,\p)$ depending only on the $j$-tuple $(r_1,\dots,r_j)$. In particular, $\omega^*$ defines
a particular type of RWPE, which we shall denote by $(X_n)_{n \in \bN_0}$ (see Figure~\ref{figure-tiling}). Also, if we project $(X_n)_{n \in \N_0}$ onto the hyper-rectangle $R$ via periodicity, we obtain a random walk $(Z_n)_{n \in \N}$ on the finite state space $R$, i.e., $Z_n$
is given by 
\begin{equation}
\label{zne}
Z_n:=\hat X_n,
\end{equation}
where $\hat x$ denotes the equivalence class defined by the basic tile $R$ on $\mathbb Z^d$.

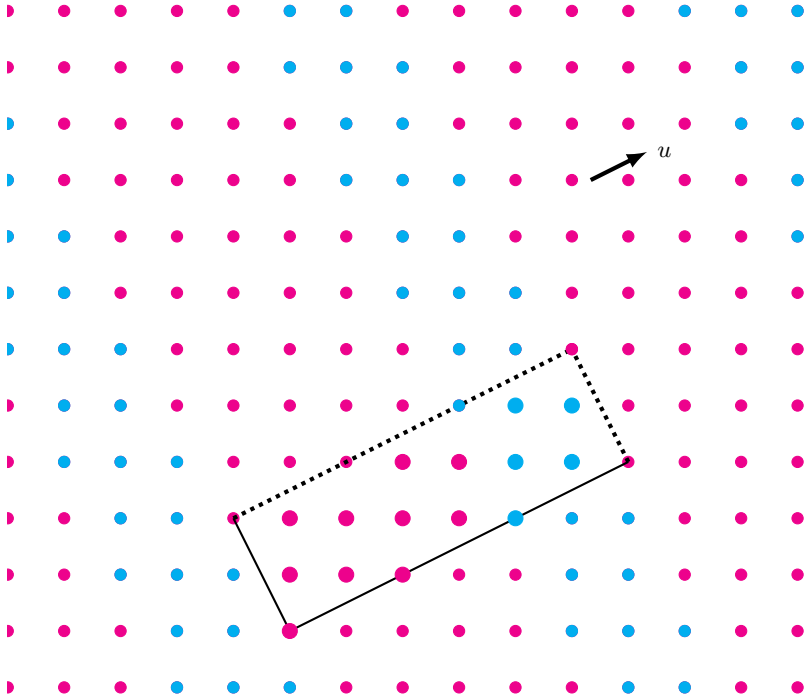
\begin{figure}[ht]
  \centering 
  \begin{tikzpicture}
    \coordinate (Origin)   at (0,0); 
    \coordinate (XAxisMin) at (-3,0); 
    \coordinate (XAxisMax) at (5,0); 
    \coordinate (YAxisMin) at (0,-2); 
    \coordinate (YAxisMax) at (0,5);

    \clip (-3,-2) rectangle (10cm,10cm);

    \coordinate (Bone) at (0,2); 
    \coordinate (Btwo) at (2,-2); 

    \foreach \x in {-5,-4.5,-4,...,5}{
      \foreach \y in {-5,-4.5,-4,...,5}{
        \node[draw,circle,magenta, inner sep=1.5pt,fill] at (1.5*\x,1.5*\y) {}; 
      }
    }

    \draw [thick,black] (0.75,-0.75) 
    -- (5.25,1.5) node [below left] {};
    \draw [ ultra thick,dotted, black] (5.25,1.5) 
   -- (4.5,3) node [below left] {};
             \draw [ thick,black] (-0,0.75) 
  -- (0.75,-0.75) node [below left] {};

       \draw [ultra thick,dotted, black] (0,0.75) 
        -- (4.5,3) node [below left] {};

   \node[draw,circle,magenta, inner sep=2pt,fill] at (0.75,-0.75) {};
   \node[draw,circle,magenta, inner sep=2pt,fill] at (2.25,0) {}; 
  \node[draw,circle,magenta, inner sep=2pt,fill] at (1.5,0) {}; 
 \node[draw,circle,magenta, inner sep=2pt,fill] at (3,0.75) {}; 
 \node[draw,circle,magenta, inner sep=2pt,fill] at (0.75,-0) {};
 \node[draw,circle,magenta, inner sep=2pt,fill] at (0.75,0.75) {};
 \node[draw,circle,magenta, inner sep=2pt,fill] at (1.5,0.75) {};
 \node[draw,circle,magenta, inner sep=2pt,fill] at (2.25,0.75) {};
 \node[draw,circle,magenta, inner sep=2pt,fill] at (2.25,1.5) {};
 \node[draw,circle,magenta, inner sep=2pt,fill] at (3,1.5) {};

\draw [ultra thick,-latex,black] (4.75,5.25) 
   -- (5.5,5.625) node [right] {$u$}; 
   \node[draw,circle,cyan, inner sep=2pt,fill] at (3.75,0.75) {}; 
 \node[draw,circle,cyan, inner sep=2pt,fill] at (4.5,1.5) {}; 
 \node[draw,circle,cyan, inner sep=2pt,fill] at (3.75,1.5) {};
 \node[draw,circle,cyan, inner sep=2pt,fill] at (3.75,2.25) {};
 \node[draw,circle,cyan, inner sep=2pt,fill] at (4.5,2.25) {};

 \node[draw,circle,magenta, inner sep=1.5pt,fill] at (4.5,3) {};
   \node[draw,circle,cyan, inner sep=1.5pt,fill] at (3,2.25) {}; 
   \node[draw,circle,cyan, inner sep=1.5pt,fill] at (3,3) {};
   \node[draw,circle,cyan, inner sep=1.5pt,fill] at (3,3.75) {};
        \node[draw,circle,cyan, inner sep=1.5pt,fill] at (3.75,3.75)
        {};
            \node[draw,circle,cyan, inner sep=1.5pt,fill] at (3.75,3) {}; 
            \node[draw,circle,cyan, inner sep=1.5pt,fill] at (2.25,3.75) {};
              \node[draw,circle,cyan, inner sep=1.5pt,fill] at
              (2.25,4.5) {};
                            \node[draw,circle,cyan, inner
                            sep=1.5pt,fill] at (2.25,5.25) {};

               \node[draw,circle,cyan, inner sep=1.5pt,fill] at 
               (3,5.25) {};
               \node[draw,circle,cyan, inner sep=1.5pt,fill] at (3,4.5) {};
  \node[draw,circle,cyan, inner sep=1.5pt,fill] at 
  (1.5,5.25) {};

  \node[draw,circle,cyan, inner sep=1.5pt,fill] at 
  (1.5,6) {};
    \node[draw,circle,cyan, inner sep=1.5pt,fill] at 
    (1.5,6.75) {};
     \node[draw,circle,cyan, inner sep=1.5pt,fill] at 
     (2.25,6.75) {};
          \node[draw,circle,cyan, inner sep=1.5pt,fill] at 
               (2.25,6) {};
  \node[draw,circle,cyan, inner sep=1.5pt,fill] at 
  (0.75,6.75) {};

   \node[draw,circle,cyan, inner sep=1.5pt,fill] at 
   (0.75,7.5) {};
   
   \node[draw,circle,cyan, inner sep=1.5pt,fill] at 
   (1.5,7.5) {};

     \node[draw,circle,cyan, inner sep=1.5pt,fill] at 
     (4.5,0.75) {};
     
     \node[draw,circle,cyan, inner sep=1.5pt,fill] at 
     (4.5,0) {};

     \node[draw,circle,cyan, inner sep=1.5pt,fill] at 
     (4.5,-0.75) {};

     \node[draw,circle,cyan, inner sep=1.5pt,fill] at 
     (5.25,-0.75) {};

     \node[draw,circle,cyan, inner sep=1.5pt,fill] at 
     (5.25,-1.5) {};

     \node[draw,circle,cyan, inner sep=1.5pt,fill] at 
   (5.25,0) {};

     \node[draw,circle,cyan, inner sep=1.5pt,fill] at 
   (5.25,0.75) {};

     \node[draw,circle,cyan, inner sep=1.5pt,fill] at 
     (6,-1.5) {};

     \node[draw,circle,cyan, inner sep=1.5pt,fill] at 
   (6,-0.75) {};

      \node[draw,circle,cyan, inner sep=1.5pt,fill] at 
      (6,7.5) {};

         \node[draw,circle,cyan, inner sep=1.5pt,fill] at 
         (6.75,7.5) {};
                  \node[draw,circle,cyan, inner sep=1.5pt,fill] at 
                  (7.5,7.5) {};
                               \node[draw,circle,cyan, inner sep=1.5pt,fill] at 
                               (7.5,6.75) {};
                                                      \node[draw,circle,cyan, inner sep=1.5pt,fill] at 
   (6.75,6.75) {}; 
                                      \node[draw,circle,cyan, inner sep=1.5pt,fill] at 
                                      (6.75,6) {};
                                                                       \node[draw,circle,cyan, inner sep=1.5pt,fill] at 
                                                                       (7.5,6) {};

                                        \node[draw,circle,cyan, inner sep=1.5pt,fill] at 
                                        (7.5,5.25) {};
                                          \node[draw,circle,cyan, inner sep=1.5pt,fill] at 
   (7.5,4.5) {};

  \node[draw,circle,cyan, inner sep=1.5pt,fill] at 
  (0.75,-1.5) {};

  \node[draw,circle,cyan, inner sep=1.5pt,fill] at 
  (0,-1.5) {};

  \node[draw,circle,cyan, inner sep=1.5pt,fill] at 
  (-0.75,-1.5) {};
  
  \node[draw,circle,cyan, inner sep=1.5pt,fill] at 
  (-0,-0.75) {}; 
  
  \node[draw,circle,cyan, inner sep=1.5pt,fill] at 
  (-0,0) {};

  \node[draw,circle,cyan, inner sep=1.5pt,fill] at 
  (-0.75,-0.75) {}; 
  
  \node[draw,circle,cyan, inner sep=1.5pt,fill] at 
  (-0.75,0) {};

  \node[draw,circle,cyan, inner sep=1.5pt,fill] at 
  (-0.75,0.75) {};
  
  \node[draw,circle,cyan, inner sep=1.5pt,fill] at 
  (-0.75,1.5) {};

  \node[draw,circle,cyan, inner sep=1.5pt,fill] at 
  (-1.5,1.5) {}; 
  \node[draw,circle,cyan, inner sep=1.5pt,fill] at 
  (-1.5,0.75) {}; 
  \node[draw,circle,cyan, inner sep=1.5pt,fill] at 
  (-1.5,0) {}; 
  \node[draw,circle,cyan, inner sep=1.5pt,fill] at 
  (-1.5,2.25) {}; 
    \node[draw,circle,cyan, inner sep=1.5pt,fill] at 
    (-1.5,3) {};

      \node[draw,circle,cyan, inner sep=1.5pt,fill] at 
  (-2.25,3) {}; 
  \node[draw,circle,cyan, inner sep=1.5pt,fill] at 
  (-2.25,2.25) {}; 
  \node[draw,circle,cyan, inner sep=1.5pt,fill] at 
  (-2.25,1.5) {}; 
  \node[draw,circle,cyan, inner sep=1.5pt,fill] at 
  (-2.25,4.5) {}; 
  \node[draw,circle,cyan, inner sep=1.5pt,fill] at 
  (-2.25,3.75) {};

      \node[draw,circle,cyan, inner sep=1.5pt,fill] at 
  (-3,3) {}; 
  \node[draw,circle,cyan, inner sep=1.5pt,fill] at 
  (-3,5.25) {}; 
  \node[draw,circle,cyan, inner sep=1.5pt,fill] at 
  (-3,6) {}; 
  \node[draw,circle, cyan , inner sep=1.5pt,fill] at 
  (-3,4.5) {}; 
  \node[draw,circle,cyan, inner sep=1.5pt,fill] at 
  (-3,3.75) {};

  \end{tikzpicture}
  \caption{Strip periodic tiling in $\mathbb Z^2$ composed of $2$ strips perpendicular to a vector $u$ with rational coordinates. The basic tile $A$ (see \eqref{bst}) is
    composed by the bold magenta and blue lattice points inside the black rectangle. Its base $F$ (see \eqref{basef}) is the lower-left side of the rectangle.}
  \label{figure-tiling}
\end{figure}

\begin{lemma} \label{l32} Fix $\kappa > 0$, $j\ge 1$ and $\cS:=\{\sigma_1,\dots,\sigma_j\} \subseteq \cP_{\bV}^{(\kappa)}$ with all $\sigma_i$'s distinct. Assume that there exists some $u \in \bQ^d$, with $|u|_2=1$,  such that
\begin{equation}
\label{vsu}
\langle \rmd(\sigma_i), u \rangle =0
\end{equation}
for all $1\le i\le j$, where $\rmd(\sigma_i)$ is the drift of $\sigma_i$ as defined in \eqref{eq:defdrift}. Then, given $\varepsilon > 0$, for any $\overline{\lambda}=(\lambda_1,\dots,\lambda_j) \in \Delta_j$  
there exists some periodic environment $\omega_{\n,\p}$ with $\p=(\vec{p}(x)) \in \cS^{\mathbf{B}_\n}$ such that $P_{0,\omega_{\n,\p}}$-almost surely, for every $1 \leq m \leq j$,
\begin{equation}
\label{emp}
\limsup_{n\to\infty}\left|\frac{\sum_{i=0}^n1_{\{X_i\in S_m\}}}{n}-\lambda_m\right|\le\varepsilon,
\end{equation}
where $S_m$ is the set of sites $x\in \bZ^d$ such that $\omega_{\n,\p}(x)=\sigma_m$. As a matter of fact, $\omega_{\n,\p}$ can be chosen as a $(u;r_1,\dots,r_j)$-strip periodic environment generated by the $\sigma_1,\ldots,\sigma_j$ for some $r_1,\dots,r_j$.
\end{lemma}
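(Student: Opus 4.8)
The plan is to reduce the time–average in \eqref{emp} to the unique stationary measure of a finite Markov chain, and then to compute that stationary measure for a carefully chosen strip-periodic environment via a diffusive rescaling that exploits the orthogonality hypothesis \eqref{vsu}. First, fix any $(u;r_1,\dots,r_j)$-strip periodic environment $\omega_{\n,\p}$ generated by $\sigma_1,\dots,\sigma_j$; since each $\sigma_i\in\cP_{\bV}^{(\kappa)}$ it is uniformly elliptic, so the projected chain $Y_n=X_n\bmod\n$ on $\mathbf{B}_{\n}$ is irreducible and has a unique stationary law $\mu_{\n,\p}$. Writing $\bar S_m:=\{x\in\mathbf{B}_{\n}:\vec p(x)=\sigma_m\}$, so that $1_{\{X_i\in S_m\}}=1_{\{Y_i\in\bar S_m\}}$, the ergodic theorem for irreducible finite-state Markov chains gives, $P_{0,\omega_{\n,\p}}$-almost surely and simultaneously for every $1\le m\le j$, that $\tfrac1n\sum_{i=0}^n 1_{\{X_i\in S_m\}}\to\mu_{\n,\p}(\bar S_m)$; hence it suffices to produce a strip-periodic environment of the required form with $|\mu_{\n,\p}(\bar S_m)-\lambda_m|\le\varepsilon$ for all $m$.

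The next step is a one-dimensional projection. Write $u=v/|v|_2$ with $v\in\bZ^d$ primitive (one checks $|v|_2\in\bN$, using that $u\in\bQ^d$ is a unit vector) and set $h(x):=\langle x,v\rangle\in\bZ$. Since the strips are level sets of $h$, the environment seen by the walk depends only on $h(X_n)$ modulo the period $M$ of the tiling in the $h$-direction, so $\zeta_n:=h(X_n)\bmod M$ is a Markov chain on $\bZ/M\bZ$ whose unique stationary law $\nu$ satisfies $\mu_{\n,\p}(\bar S_m)=\nu(\{c:\,c\ \text{lies in the}\ m\text{-th block}\})$, and whose increments inside the $m$-th strip are distributed as $\langle e,v\rangle$ with $e\sim\sigma_m$. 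Here \eqref{vsu} enters decisively: it says exactly that this increment has mean $\langle\rmd(\sigma_m),v\rangle=0$, so that $\langle X_n,v\rangle$ is a martingale, with increment variance $\rho_m^2|v|_2^2$ inside the $m$-th strip, where $\rho_m^2:=\sum_{e\in\bV}\sigma_m(e)\langle e,u\rangle^2\in(0,1]$ is strictly positive since $u\neq0$ and $\sigma_m\in\cP_{\bV}^{(\kappa)}$.

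Now comes the diffusive rescaling. I would take the $i$-th strip to have width $Kw_i$ with fixed $w_1,\dots,w_j\in\bN$, let $K\to\infty$, and write $\omega_{\n,\p}$ for this $K$-dependent environment: rescaling $h$ by $K$ and time by $K^2$, the martingale $\langle X_\cdot,v\rangle$ should converge to the (driftless, hence natural-scale) diffusion on the circle $\bR/(w_1+\cdots+w_j)\bZ$ whose diffusion coefficient is proportional to $\rho_m^2$ on the $m$-th rescaled strip — the limit has no drift precisely by \eqref{vsu}, and its diffusivity is read off from a Taylor expansion of the one-step generator. This diffusion is uniquely ergodic with invariant density proportional to $1/\rho_m^2$ on the $m$-th strip, so, after justifying that the rescaled stationary laws of the prelimiting chains $\zeta$ converge to it, I would obtain, as $K\to\infty$,
\[
\mu_{\n,\p}(\bar S_m)\ \longrightarrow\ \frac{w_m/\rho_m^2}{\sum_{k=1}^j w_k/\rho_k^2}.
\]
Given $\overline\lambda\in\Delta_j$ I would then pick $w_1,\dots,w_j\in\bN$ proportional to $\lambda_1\rho_1^2+\eta,\dots,\lambda_j\rho_j^2+\eta$ for a small $\eta>0$ (all widths positive, even when some $\lambda_m=0$); the right-hand side above equals $(\lambda_m+\eta\rho_m^{-2})/(1+\eta\sum_k\rho_k^{-2})$, which lies within $\varepsilon/2$ of $\lambda_m$ once $\eta$ is small, and then fixing such $\eta$ and choosing $K$ large enough makes $\mu_{\n,\p}(\bar S_m)$ lie within $\varepsilon$ of $\lambda_m$ for every $m$. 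Since this $\omega_{\n,\p}$ is a $(u;r_1,\dots,r_j)$-strip-periodic environment generated by $\sigma_1,\dots,\sigma_j$, it is exactly what is needed.

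I expect the diffusive rescaling to be the main obstacle. Making it rigorous means establishing an invariance principle for the martingale $\langle X_\cdot,v\rangle$ in the rescaled strip environment — in effect a martingale-problem argument for a diffusion with piecewise-constant diffusivity on a circle, which genuinely uses \eqref{vsu} for the vanishing of the drift, together with the identification of the correct behavior at the strip interfaces — and then the convergence of the corresponding stationary laws (tightness is automatic on the compact circle, and one identifies the limit through the stationarity identity $\int Lf\,\rmd\nu=0$ and the unique ergodicity of the limiting diffusion). An alternative that avoids the explicit formula is a soft degree-theoretic argument for the continuous map $(\text{strip widths})\mapsto(\text{occupation fractions})$: it needs only that the $K\to\infty$ limit exists, is continuous, and sends a vanishing strip to vanishing occupation, so that it respects the boundary faces of $\Delta_j$ and is hence onto; but this still rests on essentially the same analytic input.
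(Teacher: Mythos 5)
Your strategy rests on the same two pillars as the paper's: project the walk onto the direction $u$, use \eqref{vsu} to make $\langle X_n,v\rangle$ a martingale, and conclude that the occupation fraction of the $m$-th strip is asymptotically proportional to $(\text{width of strip }m)/\rho_m^2$ with $\rho_m^2=\sum_e\sigma_m(e)\langle e,u\rangle^2$ (the paper's $f_m$), after which the tuning of the widths is identical (the paper takes $\nu_l=\lfloor M\lambda_l f_l\rfloor$, you take $w_m\propto\lambda_m\rho_m^2+\eta$; your $\eta$-regularization in fact handles the case $\lambda_m=0$ more carefully than the paper does). Where you genuinely diverge is in how this occupation formula is justified. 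The paper stays entirely discrete: it coarse-grains the projected walk by its exit times from overlapping intervals of length $2N$, shows via optional stopping that the induced chain on interval indices is a $\tfrac12+O(N^{-1})$ random walk on a discrete circle (hence has near-uniform invariant measure), bounds the expected exit time from an interval in strip $l$ by $N^2/f_l$ again via optional stopping applied to $W_n^2-\sum f(W_i)$, and assembles the occupation times with a coupling, Borel--Cantelli and the law of large numbers. You instead propose a diffusive invariance principle, with the occupation fractions read off from the invariant density $\propto 1/\rho_m^2$ of the limiting driftless diffusion on a circle, followed by convergence of stationary laws. Your route is conceptually cleaner but its key step is left unproven, and it is genuinely delicate rather than routine: for a diffusion with discontinuous diffusivity $a(x)$ the invariant density is $\propto 1/a$ only if the limit generator is the It\^o-form $\tfrac12 a(x)f''$ (for the divergence form $\tfrac12(af')'$ it would be uniform, giving occupation fractions proportional to the widths alone and hence the wrong answer), so the identification of the interface behavior is exactly where the content lies; you flag this, but it is the whole theorem. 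The paper's renormalized exit-time argument is precisely a way of extracting the $1/f_l$ weighting without ever constructing the scaling limit, which is why it is the one carried out in detail. If you pursue your version, the convergence of invariant measures (not just of the processes) and the irreducibility/aperiodicity of the projected chain $\zeta$ on $\bZ/M\bZ$ (there can be parity obstructions depending on $v$) also need explicit treatment.
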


\begin{proof} Consider some fixed $(u;r_1,\dots,r_j)$-strip periodic environment generated by $\sigma_1,\ldots,\sigma_j$. Observe that the strip $S_i$ in the corresponding $(u;r_1,\dots,r_j)$-strip periodic tiling has width $r_l-r_{l-1}$ if $i=nj+l$ for some $l \in \{1,\dots,j\}$. We will assume throughout that $r_1,\ldots, r_j\in\mathbb N$. This periodic environment defines a RWPE, which we denote by $(X_n)_{n \in \bN_0}$. We claim that this is the desired RWPE satisfying \eqref{emp}. We will carry out the proof of this in a number of steps, which we outline below.\\
 
 \noindent {\it Step 1.} First, we will define a one-dimensional random walk given by the projection of the RWPE $(X_n)_{n \in \N_0}$ onto the line generated by $u$. Let $R$ be the basic tile in the chosen $(u;r_1,\dots,r_j)$-strip periodic tiling (see Figure \ref{figure-tiling}) and let $(Z_n)_{n \in \bN_0}$ be the random~walk induced by $(X_n)_{n \in \bN_0}$
 on $R$, see \eqref{zne}. Now define $W=(W_n)_{n \in \bN_0}$ by the formula
 \[
W_n:=\langle Z_n, u \rangle.
\] Note that $W$ takes values in the set $\mathbb W:=\{\langle x, u\rangle : x\in R\}$ and that, if we think of $\mathbb{W}$ as having periodic boundary conditions, then $W$ has independent increments, each supported on the set $\{ \langle e , u \rangle : e \in \bV\}$ and thus bounded from above in absolute value by some constant $\ell_u$ depending only on $u$. 
 Observe that, by the fact that the drifts $\rmd(\sigma_i)$ are all perpendicular~to~$u$,  (cf.\ \eqref{vsu}), $W$
 is a martingale. Notice also that, since $u$ is a unit vector, the number of points per unit length in $\mathbb W$ is some constant integer $L \in \N$ depending only on $u$.
In particular, we have that 
\[
|\mathbb W|=Lr_j.
\] We will show in the next steps that $W$ has an invariant measure which is
 approximately constant on each region of constant jump probabilities, i.e. on each set $R\cap \langle S_i, u\rangle$.\\

\noindent {\it Step 2.} To estimate the invariant measure of $(W_n)_{n \in \bN_0}$, we will have to work with a renormalized random walk corresponding to the exit times of $(W_n)_{n \in \bN_0}$ from intervals of a fixed length $2N$. In order to define such random walk, we  will first divide $\mathbb W$ into overlapping intervals of length $2N$.  Without loss of generality, we can assume that $2N$ divides each of the numbers $r_1,\ldots, r_j$ (which will be chosen accordingly later on) so that
\[
\nu_l:=\frac{r_l-r_{l-1}}{2N},
\]
are all integers. Now, let us set $\nu:=\sum_{l=1}^j \nu_l = \frac{r_j}{2N}$, $\overline{\nu}:=2L\nu$ and define for $i=0,\dots,\overline{\nu}-1$ the intervals
\begin{equation} \label{eq:JiDef}
J_i:=[N(i-1),N(i+1)] \subseteq \R.
\end{equation}
Let $V_0:=0$ and set
\[
T_0:=\min\{n> 0: W_n\notin J_0\}.
\] Observe that, by definition of $W$, the point $W_{T_0}$ must either lie in $J_1$ (if $W$ exits $J_0$ to the right) or in $J_{\overline{\nu}-1}$ (if $W$ exits $J_0$ to the left or, to be more precise, if $\langle X_n, u \rangle $ jumps to the left of $0$). Thus, we may set 
\begin{align*}
V_1:=\begin{cases}1 & \text{ if $W_{T_0} \in J_1$}, \\ \overline{\nu}-1 &\text{ if $W_{T_0} \in J_{\overline{\nu}-1}$},\end{cases}    
\end{align*} 
and then, for $k \in \bN$, define recursively
\[
T_k:=\min\Big\{n> T_{k-1}: W_n\notin J_{V_{k-1}}\Big\}
\]
 together with
\[
V_{k+1}:=\begin{cases}V_k+1 \,\,\text{mod}\,\, \overline{\nu} & \text{ if $W_{T_k} \in J_{V_k+1 \,\text{mod}\, \overline{\nu}}$} \\ V_k-1 \,\,\text{mod}\,\, \overline{\nu} &\text{ if $W_{T_k} \in J_{V_k-1 \,\text{mod}\, \overline{\nu}}$}.\end{cases}
\] In other words, $V=(V_k)_{k \in \bN_0}$ is the process taking values on the set $I_{\overline{\nu}}:=\{0,1,\dots,\overline{\nu}-1\}$ with periodic boundary conditions which starts from $0$ and, at time $k$, jumps either one step to the right (modulo $\overline{\nu}$) if $W$ exits the interval $J_{V_k}$ through its right endpoint or one step to the left otherwise.\\

\noindent {\it Step 3.} Using the fact that $(W_k)_{k \in \bN_0}$ is a
 martingale, by the optional stopping theorem we have that, as $N \to \infty$, 
\begin{equation}\label{eq:asympprob}
P(V_{k+1}=i+1| V_k=i)=\frac{1}{2}+O_N(N^{-1}).
\end{equation}
In particular, by continuity we have that, for any fixed values of the ratios $\nu_1,\dots,\nu_j \in \bN$, $V$ has an invariant measure $\pi$ which satisfies the asymptotics  
\begin{equation}
\label{pix}
\pi(i)=\frac{1}{\overline{\nu}}+o_N(1),
\end{equation} for all $i \in I_{\overline{\nu}}$
where $o_N(1)$ is an error term tending to $0$ as $N \to \infty$.\\

\noindent {\it Step 4.} We next show that the average number of steps that
 $W$ takes to exit the interval $J_i$ starting at most distance $\ell_u$ from $iN$ is of order $C_i N^2$, for some positive constant~$C_i$. To do this,
 consider the martingale 
\[
W_n^2-\sum_{i=1}^n f(W_i),
\]
where
\[
f(x)=f_l:=\sum_{e\in\mathbb V}\sigma_l(e) \langle e, u\rangle^2\qquad {\rm whenever}\ r_{l-1}\le \langle x, u\rangle<r_l.
\]
 Define for each $i\ge 0$,
 \[
 \tau_i:=T_i-T_{i-1},
 \]
 with the convention $T_{-1}=0$. Note that the random variables $(\tau_i)_{i\ge 0}$
 are not identically distributed, since the exit point of the walk $W$ from each interval $J_i$ is random (because the precise length of the increment $W_{n+1}-W_n$ depends on the direction of the jump $Z_{n+1}-Z_n$, which is random). To deal with this, we introduce, for $i \in I_{\overline{\nu}}$, the  intervals
 \[
 \tilde{J}_i:=[(i-1)N - 2\ell_u,(i+1)N+2\ell_u],
 \] where we recall that $\ell_u > 0$ is such that the increments of $W$ are bounded above by $\ell_u$. Next, for each $i \in I_{\overline{\nu}}$ we let $W^{(i)}$ be an independent copy of the $W$ starting from~$x_{i}^{(N)}$, the point in $\mathbb{W}$ which is closest to $iN$, and define
 \[
 \tau^+(i):=\inf\{ n \geq 0 : W^{(i)}_n \notin \tilde{J}_i\}.
 \] Finally, for each $i \in I_{\overline{\nu}}$ let $(\tau^+_k)_{k \in \N}$ be a sequence of i.i.d.\ random variables distributed as $\tau^+(i)$ and notice that we may couple $(\tau^+_k)_{k \in \N}$ with $(\tau_k)_{k \in \bN}$ so that, for every $k \in \bN$, on the event $\{V_k=i\}$ one has that
 \begin{equation}\label{eq:coupling}
 \tau_k \leq \tau^+_{N^k_i}(i), 
 \end{equation} where $N^k_i$ denotes the total number of visits of $V$ to site $i$ until time $k$. Indeed, if $W$ exits the interval $J_i$ by reaching some site $x \in J_{i'}$ with $i' \in \{i-1,i+1\}$ (modulo $\overline{\nu}$), then, since $J_{i'}$ is of length $2N$ and $x$ is at distance at most $2\ell_u$ from $x^{(N)}_{i'}$, the time it takes $W$ to exit this interval $J_{i'}$ (starting from any such $x$) is stochastically dominated by $\tau^+(i')$.

 
Now, notice that there are two types of sites $i \in I_{\overline{\nu}}$:

\begin{itemize}
 \item  {\it Type A:} $i$ satisfies 
 $\tilde{J}_i\subseteq [r_{l-1},r_l)$ for some $l=1,\dots,j$.  
 Here, by optional stopping we have that, for all $N$ sufficiently large (depending only on $u$),
 \begin{equation}
 \label{tya1}
 E[\tau^+(i)]=\frac{1}{f_l}E\Big[(W^{(i)}_{\tau^+(i)}-x_i^{(N)})^2\Big] \le \frac{(N+3\ell_u)^2}{f_l}.
 \end{equation}

\item {\it Type B: } $J_i$ is not contained in any of the intervals $[r_{l-1},r_l)$ for $l=1,\dots,j$. Here, since $\min_{l=1,\dots,j} f_l > 0$, by a reasoning  similar to the one for sites of type $A$, we obtain
\[
E[\tau^+(i)]\le a_i N^2,
\] for some positive constant $a_i$. Notice that there are at most $2j$ sites of type $B$.
\end{itemize}
\smallskip

\noindent {\it Step 4.} To continue, we first need to introduce some notation. Define
 for each $1\le l\le j$,
\[
\mathbb H_{l}:=\left\{i\in I_{\overline{\nu}}: J_i \cap [r_{l-1}, r_l)\neq \emptyset \right\}.
\]
 Now note that, for each 
 $1\le l\le j$ and $m \in \N$, the total number of visits of the walk $W$ up to time $T_m$ to the set 
 \[
 \mathbb W_l:=\{x\in\mathbb W:r_{l-1}\le x <r_l\},
 \]
 can be bounded from above using the coupling \eqref{eq:coupling} by
 \[
\sum_{i\in\mathbb H_{l}} \sum_{k=1}^{N^m_i} \tau^+_{k}(i),
 \]
 where we recall that
 \[
N^m_{i}=\sum_{r=1}^m 1(V_r=i).
 \]
 Observe that by the convergence theorem for Markov chains, there exists a constant $C>0$ such that, for each $i \in I_{\overline{\nu}}$
\[
P\left(\left|\frac{1}{m}N_i^m-\pi(i)\right|>\left(\frac{1}{\overline{\nu}}\right)^2\right)\le Ce^{-Cm}.
\] 
By employing the previous display in combination with the first Borel-Cantelli lemma, it follows that eventually in $m$ we have that
\[
\sum_{i\in\mathbb H_{l}} \sum_{k=1}^{N^m_i} \tau^+_{k}(i)\le
\sum_{i\in\mathbb H_{l}} \sum_{k=1}^{n_i^m} \tau^+_{k}(i),
\]
 where
 \[
n_i^m:=m\left(\pi(i)+\left(\frac{1}{\overline{\nu}}\right)^2\right).
 \]
By the law of large numbers and \eqref{pix}, it follows that $P_{0,\omega_{\n,\p}}$-a.s.\ for each $l=1,\dots,j$ we have
\[
\limsup_{m\to\infty}\frac{1}{m}\sum_{n=0}^{T_m} 1_{\mathbb{W}_l}(W_n)
\le
\sum_{i\in \mathbb H_l}\left( \pi(x)+ \left(\frac{1}{\overline{\nu}}\right)^2\right)E[\tau^+(i)].\]
If $i \in \mathbb{H}_l$ is of type $A$, then the summand corresponding to $i$ on the right-hand side of the last display is bounded above by $\big(\frac{1}{\overline{\nu}} + \big( \frac{1}{\overline{\nu}}\big)^2+o_N(1)\big)\frac{(N+3\ell_u)^2}{f_l}$. On the other hand, if $i$ is of type $B$ then its corresponding summand is bounded above by $(\frac{2}{\overline{\nu}}+o_N(1))a_iN^2$. Since there are at most $2j$
sites of type $B$ and at most $2L\nu_l + 4$ sites in $\mathbb{H}_l$ of type $A$, a straightforward computation using that $\overline{\nu} \geq 1$ allows us to conclude that
\[
\limsup_{m\to\infty}\frac{1}{m}\sum_{n=0}^{T_m} 1_{\mathbb{W}_l}(W_n) \leq \left(\frac{\nu_l}{\nu} + \frac{10}{\overline{\nu}} + 5\overline{\nu}o_N(1)\right)\frac{1}{f_l}(N+3\ell_u)^2 +2\left(\frac{2}{\overline{\nu}}+o_N(1)\right)ja_iN^2.
\]
Similarly, by suitably  bounding the sequence $(\tau_k)_{k \in \N}$ from below stochastically, one can show that
\[
\liminf_{m\to\infty}\frac{1}{m}\sum_{n=0}^{T_m} 1_{\mathbb{W}_l}(W_n) \geq \left(\frac{\nu_l}{\nu} - \frac{10}{\overline{\nu}} -5\overline{\nu}o_N(1)\right)\frac{1}{f_l}(N-3\ell_u)^2.
\]


\smallskip

\noindent {\it Step 5.}
From Step 4 we conclude that $P_{0,\omega_{\n,\p}}$-almost surely, for each $l=1,\dots,j$,
\[
\limsup_{m\to\infty}\frac{1}{m}\sum_{n=0}^m 1_{S_l}(X_n)
\le 
\frac{\left(\frac{\nu_l}{\nu} + \frac{10}{\overline{\nu}} +5\overline{\nu}o_N(1)\right)\frac{1}{f_l}(N+3\ell_u)^2 + 2\left(\frac{2}{\overline{\nu}}+o_N(1)\right)ja_iN^2}{\sum_{i=1}^j\left(\frac{\nu_i}{\nu} - \frac{10}{\overline{\nu}} -5\overline{\nu}o_N(1)\right)\frac{1}{f_i}(N-3\ell_u)^2}.
\]
Hence, by setting $\nu_l:=\lfloor M \lambda_l f_l\rfloor$ for each $l=1,\dots,j$, then choosing $M$ large enough and finally taking $N$
 sufficiently large, we obtain the upper bound corresponding to \eqref{emp}.
 The lower bound can be derived similarly.
\end{proof}


\begin{proposition}
  \label{exrwpe}
  If $\bP$ is non-nestling or marginally nestling then, given $\varepsilon > 0$, there exists a period $(\n,\p)$ such that  $\p=(\vec{p}(x))_{x\in B_{\bf n}}$ is such that for all $x\in B_{\bf n}$, $p(x)\in\mathcal S(\mathbb P)$ and the corresponding $(\n,\p)$-RWPE satisfies
\[
\left|\limsup_{n\to\infty}\frac{1}{n}\log P_{0,\omega_{\n,\p}} (X_n=0)-\left(-\inf_{\sigma\in 
    {\mathcal K}_{\bP}}I_{\sigma}(0)\right)\right|\le\varepsilon. 
\]
\end{proposition}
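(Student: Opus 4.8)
The plan is to obtain Proposition~\ref{exrwpe} as the spatial counterpart of the time‑periodic Proposition~\ref{prop31}, realised through the strip construction of Lemma~\ref{l32}, after reducing the return probability to a spectral quantity. First I would record the elementary but essential identity that, for any uniformly elliptic period $(\n,\p)$,
\[
\limsup_{n\to\infty}\tfrac1n\log P_{0,\omega_{\n,\p}}(X_n=0)=\log\inf_{\theta\in\bR^d}\rho_{\n,\p}(\theta)=-I_{\n,\p}(0),
\]
where $\rho_{\n,\p}(\theta)$ is the Perron–Frobenius eigenvalue of the tilted kernel $K_\theta(x,x+e):=\rme^{\langle\theta,e\rangle}\vec p(x\ \mathrm{mod}\ \n)(e)$ of the walk reduced modulo $\n$. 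The upper bound is immediate: on any closed path the displacement factors cancel, so $P_{0,\omega_{\n,\p}}(X_n=0)$ equals the $(0,0)$‑entry of $K_\theta^n$ for every $\theta$, and $\|K_\theta\|_{\infty\to\infty}=\max_{m\in\mathbf{B}_\n}\rme^{\Lambda_{\vec p(m)}(\theta)}$; the matching lower bound comes from choosing $\theta=\hat\theta$ minimising $\rho_{\n,\p}$ — at which point the $\hat\theta$‑tilted, Doob‑transformed periodic walk has zero velocity — and applying a local central limit theorem for that zero‑velocity RWPE. In particular, whenever $\vec p(\cdot)$ takes values in a finite set $\cS=\{\sigma_1,\dots,\sigma_j\}$, combining the submultiplicative bound with the minimax identity of Lemma~\ref{lem:saddle} gives the ``free'' estimate $I_{\n,\p}(0)\ge\inf_{\sigma\in\cK_\cS}I_\sigma(0)$, so it suffices to produce $(\n,\p)$ with $\vec p(\cdot)$ valued in $\cS(\bP)$ and $I_{\n,\p}(0)\le\inf_{\sigma\in\cK_\bP}I_\sigma(0)+\varepsilon$.

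Next I would invoke Proposition~\ref{prop31} with $\mathrm A=\cS(\bP)$, so that $\cK_{\mathrm A}=\cK_\bP$; unwinding its proof yields distinct $\sigma_1,\dots,\sigma_j\in\cS(\bP)$ and a saddle point $(t^*,\theta^*)$ of $\Lambda(t,\theta)=\Lambda_{\langle t,\sigma\rangle}(\theta)$ with $\sum_i t_i^*\Lambda_{\sigma_i}(\theta^*)=-\inf_{\sigma\in\cK_\cS}I_\sigma(0)$ within $\varepsilon/2$ of $-\inf_{\sigma\in\cK_\bP}I_\sigma(0)$, where $\cS:=\{\sigma_1,\dots,\sigma_j\}$. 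Discarding the indices with $t_i^*=0$ I may assume $t_i^*>0$ for all $i$; then $\Lambda_{\sigma_i}(\theta^*)$ equals the same constant $c:=\overline\Lambda(\theta^*)$ for every $i$, and $\theta^*$ is the unique minimiser of $\theta\mapsto\sum_i t_i^*\Lambda_{\sigma_i}(\theta)$, so $\sum_i t_i^*\rmd(\sigma_i^{\theta^*})=0$ for $\sigma_i^{\theta^*}(e):=\rme^{-c+\langle\theta^*,e\rangle}\sigma_i(e)$. Moreover, in the non‑nestling and marginally nestling cases Theorem~\ref{theorem1}(i) forces $p_*:=\langle t^*,\sigma\rangle\in\partial\cK_\bP$, so the $\sigma_i$ all lie on the face of $\cK_\bP$ exposed by $\sigma\mapsto\sum_e\rme^{\langle\theta^*,e\rangle}\sigma(e)$; I would use this face structure to pick a rational unit vector $u$ perpendicular to all the drifts $\rmd(\sigma_i)$ (and, where no exact such $u$ exists, a rational $u$ making these inner products — and $\langle\rmd(p_*),u\rangle$ — as small as desired, at the cost of a ``mirrored'', thin‑strip refinement of the tiling keeping the walk's $u$‑component null‑recurrent). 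Then let $\omega_{\n,\p}$ be the $(u;r_1,\dots,r_j)$‑strip periodic environment generated by $\sigma_1,\dots,\sigma_j$, with widths $r_1,\dots,r_j$ chosen via Lemma~\ref{l32} so that $P_{0,\omega_{\n,\p}}$‑a.s.\ the walk spends asymptotic fraction $t_i^*$ of its time in the strips of type $i$.

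Finally I would compute the return rate of this RWPE. Since $\Lambda_{\sigma_i}(\theta^*)=c$ for every $i$, a Girsanov change of measure by $\theta^*$ gives the \emph{exact} identity $P_{0,\omega_{\n,\p}}(X_n=0)=\rme^{cn}\,P_{0,\widetilde\omega}(X_n=0)$, where $\widetilde\omega$ is the strip environment with each $\sigma_i$ replaced by $\sigma_i^{\theta^*}$. Because the $u$‑component of the walk is a bounded‑increment martingale it is null‑recurrent, while the component perpendicular to $u$, which under the prescribed occupation $t^*$ is governed by $\sum_i t_i^*\Lambda_{\sigma_i}$ restricted to $u^\perp$, can be returned to the origin at exponential cost $\exp\!\big(n\inf_{\theta\in u^\perp}\sum_i t_i^*\Lambda_{\sigma_i}(\theta)\big)$, the local CLT furnishing only a polynomial correction. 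This yields
\[
\liminf_{n\to\infty}\tfrac1n\log P_{0,\omega_{\n,\p}}(X_n=0)\ \ge\ \inf_{\theta\in u^\perp}\sum_{i=1}^jt_i^*\Lambda_{\sigma_i}(\theta)\ \ge\ \inf_{\theta\in\bR^d}\sum_{i=1}^jt_i^*\Lambda_{\sigma_i}(\theta)\ =\ -\inf_{\sigma\in\cK_\cS}I_\sigma(0),
\]
i.e.\ $I_{\n,\p}(0)\le\inf_{\sigma\in\cK_\cS}I_\sigma(0)$; together with the free lower bound this pins down $I_{\n,\p}(0)=\inf_{\sigma\in\cK_\cS}I_\sigma(0)$, which is within $\varepsilon$ of $\inf_{\sigma\in\cK_\bP}I_\sigma(0)$ by the choice of $\cS$, and Proposition~\ref{exrwpe} follows.

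The hard part, concentrated in the second step, is the geometric compatibility: one needs a single rational direction $u$ that is simultaneously legal for Lemma~\ref{l32} (so that the occupation fractions are controllable and the $u$‑component is recurrent) and compatible with the saddle‑point structure, and in dimension $d\ge2$ with many extreme points such a $u$ need not be available on the nose. Making the face‑structure argument forced by the non‑nestling / marginally nestling hypothesis precise — together with the accompanying rational approximations (with the incurred errors absorbed into $\varepsilon$) and the local central limit theorem controlling the recurrent tilted walk — is where essentially all the work lies; the non‑nestling / marginally nestling assumption enters precisely to guarantee $p_*\in\partial\cK_\bP$, which is what keeps the optimal $\sigma_i$ on a common face and hence this geometry under control.
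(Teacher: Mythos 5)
Your architecture is essentially the paper's: the upper bound via tilting plus Sion's minimax (your Perron--Frobenius bound $\|K_\theta\|_{\infty\to\infty}=\max_m\rme^{\Lambda_{\vec p(m)}(\theta)}$ is the same estimate as the paper's exponential Chebyshev argument); a finite $\cS$ of $\varepsilon$-perturbations of points of $\cS(\bP)$ realizing $\inf_{\cK_\bP}I_\sigma(0)$ to within $\varepsilon$; the saddle point $(t^*,\theta^*)$ with $t_i^*>0$, constancy of $\Lambda_{\sigma_i}(\theta^*)$ and $\sum_i t_i^*\rmd(\sigma_i^*)=0$; Lemma~\ref{lem:boundary} to bound the dimension of the drift set and extract a rational perpendicular direction $u$; the strip environment of Lemma~\ref{l32}; and the change of measure back. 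Two points need repair. First, a bookkeeping slip that matters: Lemma~\ref{l32} must be applied to the \emph{tilted} generators $\sigma_i^*$, with $u$ perpendicular to the tilted drifts $\rmd(\sigma_i^*)$ --- that is what makes the $u$-component of the walk in the tilted environment a martingale and what makes the occupation fractions of the \emph{tilted} walk close to $t_i^*$. You impose perpendicularity on the untilted $\rmd(\sigma_i)$ and apply Lemma~\ref{l32} to the untilted environment, whose typical occupation fractions are irrelevant to the return event. The existence of the correct $u$ follows from $j\le d$ together with $\sum_i t_i^*\rmd(\sigma_i^*)=0$, which places the tilted drifts in a linear subspace of dimension at most $d-1$.

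Second, and more substantively, your endgame asserts that after the exact identity $P_{0,\omega_{\n,\p}}(X_n=0)=\rme^{cn}P_{0,\widetilde\omega}(X_n=0)$ a local CLT makes $P_{0,\widetilde\omega}(X_n=0)$ polynomially small. The local CLT for periodic walks (Takenami) requires the increments to be exactly centered under the stationary measure of the periodized chain, and $\widetilde\omega$ does not satisfy this: only the ideal combination $\sum_i t_i^*\rmd(\sigma_i^*)$ vanishes, whereas Lemma~\ref{l32} controls the occupation fractions only up to an error $\varepsilon$, so the actual stationary drift of $\widetilde\omega$ is merely $O(\varepsilon)$, not zero. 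A residual drift of that size makes $P_{0,\widetilde\omega}(X_n=0)$ decay exponentially, so ``polynomial correction'' is not available as stated; your intermediate bound by $\inf_{\theta\in u^\perp}\sum_i t_i^*\Lambda_{\sigma_i}(\theta)$ is likewise unsupported, since the $u$- and $u^\perp$-components of the walk do not decouple. The paper closes this step differently: it shows only that $P_{0,\widetilde\omega}(|X_n|\le n\varepsilon)\to 1$ (a law-of-large-numbers statement that tolerates approximate centering), deduces $\liminf_n\frac1n\log P_{0,\omega_{\n,\p}}(|X_n|\le n\varepsilon)\ge -c\varepsilon+\overline\Lambda(\theta^*)$, and then passes from $\{|X_n|\le n\varepsilon\}$ to $\{X_n=0\}$ using the equicontinuity near the origin of the rate functions of uniformly elliptic periodic walks. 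Substituting that argument for your local-CLT step (or re-tilting at the exact minimizer for the constructed environment and controlling the discrepancy) completes the proof.
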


 \begin{proof} Again, we subdivide the proof into various steps.
   
\noindent {\it Step 0}. We begin by proving that, for any periodic environment $\omega_{\n,\p}$ with support $\cS$, we have the upper bound
\begin{equation}\label{eq:upbound}
\limsup_{n \to \infty} \frac{1}{n} \log P_{0,\omega_{\n,\p}}(X_n=0)\le  -\inf_{\sigma\in\mathcal K_{\cS}}I_\sigma(0).
\end{equation} 
To this end, note that, for
   any $\theta\in\mathbb R^d$, the exponential Chebychev inequality together with the Markov property supplies us with the upper bound
\[
P_{0,\omega_{\n,\p}}(X_n=0)\le E_{0,\omega_{\n,\p}}\big[ \mathrm{e}^{\langle \theta, X_n \rangle }\big]\le \Big(\sup_{\sigma\in\mathcal K_{\cS}}\mathrm{e}^{\Lambda_\sigma(\theta)}\Big)^n=\mathrm{e}^{n \sup_{\sigma \in \cK_\cS} \Lambda_\sigma(\theta)}.
\] 
As a consequence,
\[
\limsup_{n \to \infty} \frac{1}{n}P_{0,\omega_{\n,\p}}(X_n=0)\le \inf_{\theta\in\mathbb R^d}\sup_{\sigma\in\mathcal K_{\cS}}\Lambda_\sigma(\theta)
=-\inf_{\sigma\in\mathcal K_{\cS}}I_\sigma(0),
\] where in the last equality we have again used Sion's minimax theorem as in Lemma~\ref{lem:saddle}. Hence, we obtain \eqref{eq:upbound}.

\noindent {\it Step 1.} Let $\varepsilon>0$ and,  recalling from above \eqref{eq:rateFunc} that $\cS(\bP)$ denotes the essential support of $\omega(0)$ under $\bP$, choose
   \begin{equation}
   \label{sss}
   \cS:=\{\sigma_1,\ldots,\sigma_j\}\subseteq \Big\{ \sigma \in \cP_{\bV} : \exists\,\vec{p} \in \cS(\bP) \text{ with }\sup_{e \in \bV} |\sigma(e)-p(e)| < \varepsilon\Big\}
   \end{equation}
   such that 
\begin{equation}\label{eq:approx}
\Big|\inf_{\sigma \in \cK_\cS} I_{\sigma}(0)-\inf_{\sigma\in\mathcal K_{\mathbb{P}}} I_\sigma(0)\Big|\le\varepsilon.
\end{equation}  
Since the infimum in $\inf_{{\mathcal K}_\bP}I_\sigma(0)$ is
attained in $\partial{\mathcal K}_\bP$ (see Lemma~\ref{lem:boundary} below), we can assume
without loss of generality that  the convex hull of the set $\mathcal{D}_\cS:=\{ \rmd(\sigma) : \sigma \in \cS\}$ is a  subset of $\mathbb R^d$ of dimension smaller than or equal to $d-1$ and thus we may take $j \leq d$.
By Lemma~\ref{lem:saddle}, there exist $t^*\in \Delta_j$ and $\theta^* \in \bR^d$ such that $(t^*,\theta^*)$ is a saddle point of the mapping $\Lambda$ from Lemma~\ref{lem:saddle} so that, in particular, we have
\[
\Lambda(t^*,\theta^*)=\inf_{\theta \in \bR^d}\max_{t \in \Delta_j}\Lambda(t,\theta)
=\max_{t \in \Delta_j} \inf_{\theta \in \bR^d}\Lambda(t,\theta)=-\inf_{\sigma \in \cK_\cS} I_\sigma(0).
\] Without loss of generality, we may assume that $t^*=(t_1^*,\dots,t_j^*)$ satisfies $t_i^*> 0$ for all~$i$, as otherwise we may replace $\cS$ by $\cS^*:=\{ \sigma_i : t_i^*>0\}$ which, by definition of $\Lambda$ and the fact that $(t^*,\theta^*)$ is a saddle point of $\Lambda$, verifies that $\inf_{\sigma \in \cK_{\cS^*}} I_\sigma(0)=\inf_{\sigma \in \cK_\cS} I_\sigma(0)$. In particular, under this assumption, it follows from the proof of Proposition~\ref{prop31} that $\Lambda_{\sigma_i}(\theta^*)=:\overline{\Lambda}(\theta^*)$ does not depend on the choice of $i=1,\dots,j$.
Therefore, for $i=1,\dots,j$, we may consider the tilted environment $\sigma_i^*$, defined for each $e \in \bV$ via the formula
\[
\sigma^*_{i}(e):=\mathrm{e}^{\langle \theta^*,e\rangle - \Lambda_{\sigma_i}(\theta^*)}\sigma_i(e)=\mathrm{e}^{\langle \theta^*,e\rangle-\overline{\Lambda}(\theta^*)}\sigma_i(e).
\]
We now observe that 
\begin{equation}
\label{tdst}
\sum_{i=1}^j t_i^* \mathrm{d}(\sigma_i^*)=0.
\end{equation}
Indeed, since $\theta^*$ is a global minimum of $\Lambda_{\langle t^*,\sigma \rangle}$, where $\Lambda_{\langle t^*,\sigma \rangle}$ is defined as in \eqref{eq:lgf1} for $\langle t^*,\sigma\rangle:=\sum_{i=1}^j t_i^* \sigma_i$, we have in particular that $\nabla_\theta \Lambda_{\langle t^*,\sigma \rangle} (\theta^*)=0$, from where the fact that $\sum_{i=1}^j t_i^* \mathrm{d}(\sigma_i^*)=0$ immediately follows.
Since the convex hull of $\mathcal{D}_{\cS}$ is a subset of $\mathbb R^d$ of dimension smaller than or equal to $d-1$,
there must exist some $u\ne 0$ such that
\[
\langle \rmd(\sigma_i^*),u \rangle = 0
\]
for all $1\le i\le j$. Furthermore, by choosing the $\sigma_i$ appropriately, we can guarantee that $u$ has rational coordinates. We can now apply Lemma \ref{l32} to find a random walk $(X_n)_{n \in \bN_0}$ in some periodic
environment $\omega^*_{\n,\p}$ supported on $\sigma_1^*,\dots,\sigma_j^*$,  
whose empirical measure satisfies
for each $1\le k\le j$,
\begin{equation}
\label{lk}
\limsup_{n\to\infty}\left|\frac{\sum_{i=0}^n 1_{S_k}(X_i)}{n}-t_k^*\right|\le \frac{\varepsilon}{4},
\end{equation}
where $S_k$ is the
set of sites $x\in \bZ^d$ such that $\omega^*_{\n,\p}(x)=\sigma^*_k$.\\

   \noindent {\it Step 2.} Next, we give a construction of the RWPE from Step 1 which will be
   convenient for the proof of the lower bound. For each
   $1\le i\le j$, let $(Z^{(i)}_n)_{n \in \bN_0}$
   be an independent random walk jumping according to $\sigma^*_i$. Set $X_0=0$, $\tau^{(i)}_0:=1_{S_i}(X_0)$ and, for each~$n \in \bN$, define recursively
\[
X_{n}:=\sum_{i=1}^j Z^{(i)}_{\tau^{(i)}_{n-1}} \qquad \text{ and }\qquad \tau^{(i)}_n:=\sum_{k=0}^n 1_{S_i}(X_k).
\]
Clearly $(X_n)_{n \in \bN_0}$ has the law of a random walk in the space-periodic
environment $\omega^*_{\n,\p}$ from Step 1.\\

\noindent {\it Step 3.} Finally, consider the space-periodic environment $\omega_{\n,\p}$ defined on each $x \in \bZ^d$ via the condition $\omega_{\n,\p}(x)=\sigma_i(x)$ if and only if $\omega_{\n,\p}^*(x)=\sigma_i^*$, i.e., 
\[
\omega^*_{\n,\p}(x,e)=\mathrm{e}^{\langle \theta^*,e\rangle-\overline{\Lambda}(\theta^*)}\omega_{\n,\p}(x,e).
\] 
Note that the Radon-Nikodym derivative of the law $P_{0,\omega_{\n,\p}^*}$ up to time $n$ with
respect to $P_{0,\omega_{\n,\p}}$ is 
$\mathrm{e}^{\langle \theta^*, X_n\rangle -n\overline{\Lambda}(\theta^*)}$. 
Now, for each $n \in \bN$ consider the events
\[
A^{(\varepsilon)}_n:=\bigcap_{i=1}^j\left\{|Z^{(i)}_{nt^*_i}-nt^*_i \rmd(\sigma_i^*)|\le \frac{n\varepsilon}{2j} \right\}
\]
and
\[
B^{(\varepsilon)}_n:=\bigcap_{i=1}^j\left\{|\tau^{(i)}_{n-1}-nt_i^* |\le \frac{n\varepsilon}{2j} \right\}.
\]
We will show that there exists a constant $c > 0$
such that
\begin{equation}
  \label{lb}
  P_{0,\omega_{\n,\p}}(|X_n|\le n\varepsilon)\ge
\mathrm{e}^{-cn\varepsilon + n\overline{\Lambda}(\theta^*)}P_{0,\omega^*_{\n,\p}}\big(
  A^{(\varepsilon)}_n \cap B^{(\varepsilon)}_n \big).
\end{equation}
  From (\ref{tdst}), it is straightforward to check that the inclusion $A^{(\varepsilon)}_n \cap B^{(\varepsilon)}_n \subseteq \{ |X_n| \leq n\varepsilon\}$ holds for all $n \in \bN$. On the other hand, by the definition of $\omega_{\n,\p}$, we have that
\[ E_{0,\omega_{\n,\p}}\big[\mathrm{e}^{\langle \theta^*,X_n\rangle}; |X_n|\le n\varepsilon \big]
 = \mathrm{e}^{n\overline{\Lambda}(\theta^*)} P_{0,\omega^*_{\n,\p}}\left( |X_n|\le n\varepsilon \right).
\]
Hence, by the Cauchy-Schwarz inequality, for some constant $c>0$ (depending only on the dimension $d$ and $\theta^*$) we have that
\[
\mathrm{e}^{n\overline{\Lambda}(\theta^*)} P_{0,\omega^*_{\n,\p}}\left( A^{(\varepsilon)}_n \cap B^{(\varepsilon)}_n\right)\le
\mathrm{e}^{n\overline{\Lambda}(\theta^*)} P_{0,\omega^*_{\n,\p}}\left(|X_n|\le n\varepsilon \right)
\le \mathrm{e}^{cn\varepsilon }P_{0,\omega_{\n,\p}}(|X_n|\le n\varepsilon).
\] This immediately gives \eqref{lb}.\\

\noindent {\it Step 4.} From the law of large numbers and \eqref{lk} in Step 1, for every $i=1,\dots,j$,
\[
\limsup_{n\to\infty}\Big|\frac{\tau^{(i)}_{n-1}}{n}-t_i^*\Big|\leq \frac{\varepsilon}{4} \qquad P_{0,\omega^*_{\n,\p}}-a.s.  
\]
and
\[
\lim_{n\to\infty}\frac{Z^{(i)}_{n}}{n}= \rmd(\sigma_i^*)\qquad P_{0,\omega^*_{\n,\p}}-a.s.
\]
It follows that $\lim_{n\to\infty} P_{0,\omega^*_{\n,\p}}(A^{(\varepsilon)}_n \cap B^{(\varepsilon)}_n)=1$.
Therefore, from Step 3, we see that for every $\varepsilon>0$,
\begin{equation}
\label{fff}
\limsup_{n\to\infty}\frac{1}{n}\log P_{0,\omega_{\n,\p}}(|X_n|\le n\varepsilon)\ge -c\varepsilon+\overline{\Lambda}(\theta^*) =-c\varepsilon-\inf_{\sigma \in \cK_\cS} I_\sigma(0),
\end{equation} where the last inequality follows from the proof of Proposition~\ref{prop31}.
Now, note that for each $\varepsilon$, the
random walk in the periodic environment with law $P_{0,\omega_{\n,\p}}$
satisfies a large deviation principle with rate functions
$I_{\bf n,\bf p}$ which are convex and continuous in $D^\circ$ (see the explanation
given after display (\ref{eq:rateFunc})). Now, the convexity and continuity of these
rate functions 
in $D^o$ (see \cite{V03}), together with the fact that they are bounded by  $|\log\kappa|$ there (where $\kappa$ is the ellipticity constant), imply that they are equicontinuous in any
compact subset of $D^\circ$. Therefore we have that
\[
\limsup_{n\to\infty}\frac{1}{n}\log P_{0,\omega_{\n,\p}}(X_n=0)
\ge \limsup_{n\to\infty}\frac{1}{n}\log P_{0,\omega_{\n,\p}}(X_n\le n\varepsilon)-g(\varepsilon),
\]
where $g(\varepsilon)=o(\varepsilon)$. Combining this with (\ref{fff}),
we conclude that for every $\varepsilon>0$, there exists
a periodic environment $\omega_{\bf n,\bf p}$ with $\vec{p}(x)\in\mathcal S$ for all $x\in B_{\bf n}$ such that

\begin{equation}
\label{llll}
\limsup_{n\to\infty}\frac{1}{n}\log P_{0,\omega_{\n,\p}}(X_n=0)\ge -c\varepsilon-g(\varepsilon)-\inf_{\sigma \in \cK_\cS} I_\sigma(0),
\end{equation}

On the other hand, by the definition
of $\mathcal S$ in (\ref{sss}) Step 1, we know that the exists a periodic environment $\omega_{\bf n,\bf p'}$ such that for every $x\in B_{\bf n}$ and $e\in\mathbb V$, $|p(x,e)-p'(x,e)|\le\varepsilon$, and $\vec{p}\, '\in\mathcal S(\mathbb P)$. Hence,
from (\ref{llll}) we conclude that

\begin{equation}
\nonumber
\limsup_{n\to\infty}\frac{1}{n}\log P_{0,\omega_{\n,\p'}}(X_n=0)\ge -c\varepsilon-g(\varepsilon)-\inf_{\sigma \in \cK_\cS} I_\sigma(0),
\end{equation}
   This concludes the proof of
  the lower bound. The upper bound follows from Step 0 since $\cS$ satisfies \eqref{eq:approx}.
\end{proof}

\section{Proof of Theorem \ref{theorem1}} \label{sec:proofMain}

The proof of part $(i)$ of Theorem \ref{theorem1}  will be separated into two parts, one on the upper bound  and another on the lower bound for the large deviation probabilities. The proof of upper bound is straightforward, while proving the lower bound takes much effort and tells us more about the behavior of quenched RWRE.
\subsection{Part $(i)$ of Theorem \ref{theorem1} (upper Bound of (\ref{2p1}))}
Due to the continuity of $I$ in the origin (cf.\ Section \ref{sec:NotRes}), in order to establish $I(0)\ge -\log \left(\inf_{\theta\in\mathbb R^d}\sup_{\sigma\in\mathcal K_{\mathbb P}}
  \sum_{e\in \bV}e^{\langle \theta, e\rangle }\sigma(e)\right)$,
it is sufficient to prove that 
 for $\mathbb{P}$-almost all $\omega$ we have
\begin{equation} \label{eq:0ProbBd}
\limsup_{n \rightarrow \infty } \frac{1}{n}\log P_{0,\omega}(X_n=0) \leq \log \Big(\inf_{\theta\in\mathbb R^d}\sup_{\sigma\in\mathcal K_{\mathbb P}}
  \sum_{e\in \bV}e^{\langle \theta, e\rangle}\sigma(e)\Big).
\end{equation}
Note that for all $\theta \in \R^d$ and $\mathbb{P}$-almost all $\omega \in \Omega$, we have 
 \begin{eqnarray*}
 & P_{0,\omega}(X_n=0) \leq E_{0,\omega}[e^{\langle \theta, X_n\rangle}]\leq \Big(\sup_{\sigma\in\mathcal K_{\mathbb P}}
 \sum_{e\in U}e^{\langle \theta, e\rangle}\sigma(e)\Big)^n
 \end{eqnarray*}
 for all $n\ge 0$, so
 $$
\limsup_{n\to\infty}\frac{1}{n}\log P_{0,\omega}
(X_n=0)\le \log
\Big(\sup_{\sigma\in\mathcal K_{\mathbb P}}
 \sum_{e\in U}e^{\langle \theta, e\rangle }\sigma(e)\Big).
 $$
 Now \eqref{eq:0ProbBd} follows by taking the infimum over $\theta$.

\subsection{Proof of part $(i)$ (lower bound of (\ref{2p1})), parts $(ii)$ and $(iii)$}
First note that it is always possible to choose $\delta_0$ small enough so that (\ref{bccl}) is satisfied.

Next we will prove that 
$$
I(0)\le -\log \Big(\inf_{\theta\in\mathbb R^d}\sup_{\sigma\in\mathcal K_{\mathbb P}}
  \sum_{e\in U}e^{\langle\theta, e\rangle }\sigma(e)\Big).
  $$
For this purpose, we provide a sufficiently likely strategy for the RWRE to be in the origin at large times $n$. More precisely, we will force the random walk to move to the
center of a ball of radius $\delta(\log n)^{1/d}$  inside a box of side length
$\frac{n}{\log n}$, where the environment is periodic and generates
the appropriate large deviation rate function, in time $O(\frac{n}{\log
    n})$, then stay in this ball a time of order $n$, and
move back to $0$, again in time  $O(\frac{n}{\log n})$.

\smallskip

\begin{lemma} We have that for $\mathbb P$-almost all $\omega \in \Omega$,
  $$
    \liminf_{m \rightarrow \infty }\frac{1}{2m}\log 
 P_{0,\omega}(X_{2m}=0)
  \geq \log \Big(\inf_{\theta \in \R^d} \sup_{\sigma \in \mathcal
    K_{\mathbb P}} \sum_{e \in U}
  e^{\langle \theta,  e\rangle}\sigma(e)\Big).
  $$
\end{lemma}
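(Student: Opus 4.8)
The plan is to fix a small $\varepsilon>0$, establish for each such $\varepsilon$ a lower bound of the form $-\inf_{\sigma\in\cK_\bP}I_\sigma(0)-C\varepsilon$ for the $\liminf$, and then let $\varepsilon\downarrow0$; since by Sion's minimax theorem (exactly the swap used in Lemma~\ref{lem:saddle} and in Step~0 of Proposition~\ref{exrwpe}) the right-hand side of the asserted inequality equals $-\inf_{\sigma\in\cK_\bP}I_\sigma(0)$, this suffices. If the walk is nestling, then $\cK_\bP$ contains a zero-drift probability vector on $\bV$; any such vector is automatically symmetric, so $I_\sigma(0)=0$ for it and the right-hand side is $0$, and the assertion $\liminf_m\tfrac{1}{2m}\log P_{0,\omega}(X_{2m}=0)\ge 0$ is Zerner's trap estimate~\cite{Z98}. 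I may therefore assume the walk to be non-nestling or marginally nestling, which is the hypothesis of Proposition~\ref{exrwpe}. I apply that proposition, keeping from its proof the finer data: a period $(\n,\p)$ with every site-marginal $\vec{p}(x)\in\cS(\bP)$, a saddle point $(t^*,\theta^*)$ of the associated map $\Lambda$ with all $t^*_i>0$, the tilted environment $\omega^*_{\n,\p}$ built from $\sigma^*_i(e)=\rme^{\langle\theta^*,e\rangle-\overline\Lambda(\theta^*)}\sigma_i(e)$ which satisfies $\sum_i t^*_i\rmd(\sigma^*_i)=0$, the fact that the Radon--Nikodym derivative of $P_{0,\omega^*_{\n,\p}}$ with respect to $P_{0,\omega_{\n,\p}}$ up to time $n$ equals $\rme^{\langle\theta^*,X_n\rangle-n\overline\Lambda(\theta^*)}$, and the relation $\overline\Lambda(\theta^*)=-\inf_{\sigma\in\cK_\cS}I_\sigma(0)\ge-\inf_{\sigma\in\cK_\bP}I_\sigma(0)-\varepsilon$. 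Crucially, the $\omega^*_{\n,\p}$-walk is uniformly elliptic and has zero mean velocity (its law of large numbers limit is $\sum_i t^*_i\rmd(\sigma^*_i)=0$).

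Next is a Borel--Cantelli step planting an approximate copy of $(\n,\p)$ in the true environment. Because the environment is i.i.d.\ and each $\vec{p}(x)$ lies in the essential support $\cS(\bP)$, the probability that $\omega$ is $\varepsilon$-close to $\omega_{\n,\p}$ --- in the sense that $|\omega(x)-\omega_{\n,\p}(x)|_\infty\le\varepsilon$ for all $x$ --- throughout a fixed ball $\rmB(y,\delta(\log N)^{1/d})$ is at least $\rme^{-c_{\varepsilon,\delta}\log N}=N^{-c_{\varepsilon,\delta}}$, with $c_{\varepsilon,\delta}\to0$ as $\delta\to0$. Tiling the box $\rmB(0,a_N)$ (here $a_N=\lfloor N/\log N\rfloor$) by $M_N$ pairwise disjoint such balls, which are independent under $\bP$, one has $M_N\cdot N^{-c_{\varepsilon,\delta}}\to\infty$ as soon as $\delta<\delta_0(d,\varepsilon)$ is small enough, so a first-moment bound together with Borel--Cantelli yields $\bP(\liminf_N G^{\varepsilon,\delta}_{N,\n,\p})=1$ --- which is exactly \eqref{bccl}. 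Hence for $\bP$-almost every $\omega$ and every sufficiently large $N$ there is $x_\n\in\rmB(0,a_N)$ with $\omega$ being $\varepsilon$-close to $\omega_{\n,\p}$ on all of $\rmB(x_\n,\delta(\log N)^{1/d})$.

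Fix such an $\omega$, write $N=2m$ (adjusting $a_N$ by at most one so that all three phases below are parity-admissible), set $R_N:=\delta(\log N)^{1/d}$ and $N':=N-2a_N=N(1+o(1))$, and bound $P_{0,\omega}(X_{2m}=0)$ below by $P_{0,\omega}(A^{\varepsilon,\delta}_{N,\n,\p,x_\n})$, i.e.\ by the probability that the walk (1) moves from $0$ to $x_\n$ in $a_N$ steps, (2) stays in $\rmB(x_\n,R_N)$ during $[a_N,N-a_N]$ and is at $x_\n$ at time $N-a_N$, and (3) moves from $x_\n$ back to $0$ in $a_N$ steps. Phases (1) and (3) each have probability at least $\kappa^{a_N}=\rme^{-o(N)}$ by uniform ellipticity, hence are negligible on the exponential scale since $a_N=o(N)$. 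For phase~(2): by translation invariance and because the $\varepsilon$-closeness of $\omega$ to $\omega_{\n,\p}$ on the ball is a per-step multiplicative perturbation bounded below by $1-\varepsilon/\kappa$ (costing at most $\rme^{O(N\varepsilon)}$ over the $N'$ steps), and then changing measure from $\omega_{\n,\p}$ to $\omega^*_{\n,\p}$ via the Radon--Nikodym identity, which contributes a factor $\rme^{N'\overline\Lambda(\theta^*)}$ on the event $\{X_{N'}=x_\n\}$, the phase-(2) probability is at least $\rme^{-O(N\varepsilon)+N'\overline\Lambda(\theta^*)}$ times the confinement-and-pinning probability for the mean-zero periodic walk $\omega^*_{\n,\p}$. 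Everything therefore reduces to showing that, uniformly over starting sites $x\in\bZ^d$,
\[
\liminf_{N\to\infty}\frac{1}{N}\log P_{x,\omega^*_{\n,\p}}\big(X_n\in\rmB(x,R_N)\ \text{ for all }n\le N',\ X_{N'}=x\big)\ge 0 .
\]
Granting this, and using $N'/N\to1$ and $\overline\Lambda(\theta^*)\le0$, one obtains $\liminf_N\tfrac{1}{N}\log P_{0,\omega}(X_N=0)\ge\overline\Lambda(\theta^*)-C\varepsilon\ge-\inf_{\sigma\in\cK_\bP}I_\sigma(0)-C'\varepsilon$, and letting $\varepsilon\downarrow0$ completes the proof.

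I expect the displayed confinement-and-pinning estimate to be the main obstacle. It splits into (a) a survival bound: the $\omega^*_{\n,\p}$-walk killed upon leaving a ball of radius $R$ has top (principal Dirichlet) eigenvalue $\lambda_R$ with $1-\lambda_R\le CR^{-2}$, whence the walk stays in $\rmB(x,R_N)$ up to time $N'$ with probability at least $\rme^{-CN'/R_N^2}\cdot R_N^{-O(1)}=\rme^{-o(N)}$ for our $R_N=\delta(\log N)^{1/d}$, $N'\asymp N$; and (b) a local CLT for the periodic walk restricted to a mesoscopic box, giving that, conditionally on the confinement, the walk is at $x$ at time $N'$ with probability at least $R_N^{-O(1)}$. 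Part~(a) is the delicate point: since the local drift of $\omega^*_{\n,\p}$ only has vanishing \emph{spatial average} while oscillating pointwise, the naive bump-function test function yields an $O(R^{-1})$ error from the first-order term, and to sharpen it to $O(R^{-2})$ one must add a bounded periodic corrector --- i.e.\ invoke (quantitative) periodic homogenization, whose homogenized operator has no drift precisely because the mean velocity is zero. The parity bookkeeping in $X_{N'}=x$ and making all the $o(N)$- and $\varepsilon$-errors uniform are further points of care; alternatively, one can sidestep the eigenvalue analysis by reducing the confined space-periodic walk to a confined time-periodic walk, as in the reduction underpinning Proposition~\ref{exrwpe}, and control the latter via its one-dimensional projection along $u$.
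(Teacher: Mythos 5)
Your overall architecture coincides with the paper's: minimax swap, Proposition~\ref{exrwpe} to produce a near-optimal periodic environment, the Borel--Cantelli planting of an $\varepsilon$-copy of $\omega_{\n,\p}$ in $\rmB(0,N/\log N)$, the three-phase decomposition with $\kappa^{a_N}$ costs for the travel phases, the $\rme^{O(N\varepsilon)}$ comparison cost inside the ball, and the exponential tilt via the Radon--Nikodym factor $\rme^{\langle\theta^*,X_n\rangle-n\overline\Lambda(\theta^*)}$. Your separate treatment of the nestling case via Zerner is a reasonable (and arguably cleaner) way to cover the hypothesis of Proposition~\ref{exrwpe}. However, there is a genuine gap: the confinement-and-pinning estimate
\[
P_{x,\omega^*_{\n,\p}}\big(X_n\in\rmB(x,R_N)\ \forall n\le N',\ X_{N'}=x\big)\ge \rme^{-o(N)}
\]
is the heart of the lemma, and you do not prove it --- you explicitly flag it as ``the main obstacle'' and offer two unexecuted sketches. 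Route~(a) (Dirichlet eigenvalue gap $O(R^{-2})$) is not a routine computation here: the killed walk is non-reversible with a pointwise drift of order one, so a Rayleigh-quotient/test-function argument is not directly available, and without the periodic corrector (i.e.\ a genuine homogenization input) one does not even get a polynomial-in-$R$ spectral gap --- a walk with non-averaging drift pays an exponential-in-$N$ confinement cost. (Incidentally, your worry about $O(R^{-1})$ versus $O(R^{-2})$ is moot for $R_N=\delta(\log N)^{1/d}$, since $N/R_N=o(N)$ already; the real issue is ruling out an $N$-linear cost.) Route~(b) is only a one-line pointer.

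The paper closes this step differently and more concretely: it forces the tilted walk to return to the center every $L_n=2\lfloor\delta(\log n)^{1/d}\rfloor$ steps, so that confinement is automatic for nearest-neighbour steps, and bounds each return probability from below by $C L_n^{-d/2}$ using Takenami's local limit theorem for random walks in periodic environments (\cite{T00}), whose centering hypothesis is exactly what the tilt to $P_*$ provides. Multiplying over the $(n-2a_n)/L_n$ blocks gives a total cost $\rme^{-Cn\log\log n/(\log n)^{1/d}}=\rme^{-o(n)}$, with no eigenvalue or corrector analysis needed. To complete your proof you would need either to carry out this local-CLT argument or to actually establish the homogenization/corrector bound you allude to; as written, the key estimate is asserted rather than proved.
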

\begin{proof}
 As $\log \sum_e e^{\langle \theta , e\rangle}\sigma(e)$ is convex in $\theta$ and concave in $\sigma(\cdot)$, using a minimax argument, we observe that
 $$\inf_{\theta \in \R^d} \sup_{\sigma \in \mathcal K_{\mathbb P}} \sum_e e^{\langle\theta ,
   e\rangle}\sigma(e)=\sup_{\sigma \in \mathcal K_{\mathbb P}} \inf_{\theta \in \R^d} \sum_e e^{\langle \theta
   , e\rangle}\sigma(e).
 $$
 Consider now $\sigma$ that maximizes $\inf_{\theta \in 
   \R^d} \sum_e e^{\langle \theta , e\rangle}\sigma(e)$.
 Using \eqref{eq:rateFunc} and  Proposition \ref{exrwpe}, for each $\varepsilon > 0$
 there exists an $(\mathbf n,\mathbf p)$-RWPE such that
 \begin{equation}
 \label{penv}
 \lim_{n\to\infty}\frac{1}{n}\log P_{0,\omega_{\bf n, \bf p}}(X_n=0)=-I_{\bf n,\bf p}(0)\ge 
 \log\Big(\inf_{\theta \in 
   \R^d} \sup_{\sigma\in \mathcal K_{\mathbb P}}\sum_e e^{\langle \theta , e\rangle }\sigma(e)\Big) - h(\varepsilon),
 \end{equation}
 where $h(\varepsilon)=o(\varepsilon)$.
 (\ref{bccl}) implies that
 in the box $\rmB(0,\frac{n}{\log n})$ (recall the definition below \eqref{def:GDef}) 
for each $\varepsilon >0$ there exists $\delta > 0$ such that
with probability $1$, eventually in $n$  there exists $z_n \in \rmB(0,\frac{n}{\log n})$  such that for all
 $x\in  B_n:=\{x'\in\mathbb Z^d:|x'-z_n|_\infty \le \delta (\log n)^\frac1d\}$ we have 
 \begin{equation} \label{eq:omegaApprox}
 |\omega(x,e)-\omega_{\mathbf n,\mathbf p}(x,e)|\le\varepsilon.
 \end{equation}
Let $a_n=o(n)$. Now, consider the following events: 
\begin{enumerate}[(1)]
    \item  $A_1$, the event that the
random walk $(X_n)_{n\ge 0}$ moves from $0$ to the center $z_n$ of the ball
$B_n$ in $a_n$  steps; 
\item $A_2$ the event that the random walk
$(X_n)_{n\ge 0}$ stays between times $0$ and $n-2a_n$ at $\infty$-distance at most $\delta (\log n)^\frac1d$ to $z_n$, and that at time $n-2a_n$ it ends up in the center $z_n$ of the
ball $B_n$; 
\item $A_3$, the event that the random walk $(X_n)_{n\ge 0}$
moves in $a_n$ steps, from the center $z_n$
of the ball $B_n$ to the center of the box $C_n$. 
\end{enumerate}
By the Markov
property, we then have, for appropriate choices of $a_n$, and $n$ even, that
$$
P_{0,\omega}(X_n=0)\ge
P_{0,\omega}(A_1)P_{z_n,\omega}(A_2)P_{z_n,\omega}(A_3)\ge \kappa^{2a_n}P_{z_n,\omega}(A_2),
$$
where we have used the uniform ellipticity property and the fact that
the side of the box $C_n$ is of length $o(n)$.

We will now lower bound the probability $P_{z_n,\omega}(A_2)$. Indeed, bearing in mind \eqref{eq:omegaApprox} we note that
\begin{equation}
\label{ll1}
P_{z_n,\omega}(A_2)\ge e^{-c\varepsilon n}P_{z_n, \omega_{\mathbf n, \mathbf p}}
\Big(X_{n-2a_n}=z_n, \sup_{0\le m\le
  n-2a_n}|X_m-z_n|_\infty\le \delta (\log n)^{1/d}\Big).
\end{equation}
Now note that for $n$ even, we have that
$$
P_{z_n, \omega_{\mathbf n, \mathbf p}} (X_{n-2a_n}=z_n)>0.
$$
Furthermore,
writing $\theta_*$ for the argmin of the Laplace transform $\Lambda_{X_1}$ of $X_1$, so that 
$$
E_{z_n, \omega_{\mathbf n, \mathbf p}} \big[e^{\langle \theta_* , X_1\rangle }\big]=\min_{\theta\in\mathbb R^d}E_{z_n, \omega_{\mathbf n, \mathbf p}} \big[e^{\langle \theta , X_1\rangle}\big],
$$
we denote by
$P_*$ the law under which the random walk $X_1$ jumps from $x$ to $x\pm e_i$ with probability
$\frac{e^{\theta_{*,i}}\omega_{\mathbf n, \mathbf
    p}(x,e_i)}{\Lambda_{X_1}(\theta_*)}$ and
  $\frac{e^{-\theta_{*,i}}\omega_{\mathbf n,\mathbf p}(x,-e_i)}{\Lambda_{X_1}(\theta_*)}$, respectively. Writing furthermore
$I_{\bf n,\bf p}$ for the rate function of RWPE in the environment
$\omega_{\mathbf n, \mathbf p}$, we obtain
\begin{align}\label{decomp}
\begin{split}
&P_{z_n, \omega_{\mathbf n, \mathbf p}} \Big(X_{n-2a_n}=z_n, \sup_{0 \leq m \leq n-2a_n}|X_m-z_n|_\infty \leq \delta (\log n)^{1/d}\Big)\\
&=\Lambda^{n-2a_n}_X(\theta_*)P_*\Big(X_{n-2a_n}=0, \sup_{0 \leq m \leq n-2a_n}|X_m|_\infty \leq \delta  (\log n)^{1/d}\Big)\\
&=e^{-(n-2a_n)I_{\bf n,\bf p}(0)} P_*\Big (X_{n-2a_n}=0, \sup_{0 \leq m \leq n-2a_n}|X_m|_\infty \leq \delta (\log n)^{1/d}\Big ).
\end{split}
\end{align}
From (\ref{ll1}) and \eqref{decomp}, we see that it is enough to prove that 
$$
\limsup_{n \rightarrow \infty} \frac{1}{n} \log P_* \Big (\sup_{0 \leq m \leq n-2a_n}|X_m|_\infty \leq \delta (\log n)^{1/d}, X_{n-2a_n}=0\Big)=0.
$$
To do this, we will consider paths of $(X_n)_{n\ge 0}$ which return to
$0$ every  $L_n=2\lfloor \delta (\log n)^{1/d}\rfloor$ steps. Without loss of
generality, we will choose $a_n$ so that  $L_n$ is a divisor of
$n-2a_n$. Hence,

$$
P_*\Big(\sup_{0 \leq m \leq n-2a_n}|X_m| \leq \delta (\log n)^{1/d}, 
X_{n-2a_n}=0\Big)
\ge \big( P_*(X_{L_n}=0)\big)^{(n-2a_n)/L_n}.
$$
Next, we will apply the local limit theorem for (centered) RWPE, Theorem 1.1, proved by Takenami in \cite{T00}. While this is proved for aperiodic RWPE, it can be easily adapted to
the nearest neighbor case using the fact that the even sublattice of $\mathbb Z^d$ is isomorphic to $\mathbb Z^d$. While the remaining assumptions of that source are easy to check in our setting, let us note that the second part of Assumption 1.3 of \cite[Theorem 1.1]{T00} (centered increments under stationary distribution) follows from the above transition from $P_{z_n, \omega_{\mathbf n, \mathbf p}}$ to $P_*$. 
As a consequence, 
\cite[Theorem 1.1]{T00} then in particular entails the 
existence of a constant $C>0$ such that
$$
P_*(X_{L_n}=0)=\frac{C}{( L_n)^{d/2}}+o\left(L_n^{-d/2}\right).
$$
Hence, for $n$ even,
$$
P_*\Big (\sup_{0 \leq m \leq n-a_n}|X_m|_\infty\leq \delta (\log n)^{\frac{1}{d}}, 
X_{n-a_n}=0 \Big) \ge  Ce^{-C\frac{n\log\log n}{(\log n)^{1/d}}}.
$$
Finally, we construct a deterministic path from the center of the
neighborhood to the origin, which happens with probability at least
$\kappa^{a_n}$. Putting this together, we conclude that for $n$ even,
$$
P_{0,\omega}(X_{n}=0) \geq C\kappa^{2a_n} e^{-(n-a_n)I_{\bf n,\bf p}(0)}
  e^{-C\frac{n\log\log n}{(\log n)^{1/d}}}.
$$
Thus, by (\ref{penv}),
$$
\liminf_{m\to\infty}\frac{1}{2m}
\log P_{0,\omega}(X_{2m}=0)
\geq -I_{\bf n,\bf p}(0)\ge\log \Big(\inf_{\theta \in \R^d} \sup_{\sigma\in \mathcal K_{\mathbb P}} \sum_e
e^{\langle \theta , e\rangle }\sigma(e)\Big)-h(\varepsilon).
$$
\end{proof}

\subsection{Proof of part (i) (maximizer at the boundary of the convex hull of the law)}

\begin{lemma}\label{lem:boundary}
 In the non-nestling or marginally nestling cases, the supremum
 \[
 \sup_{\vec{p} \in \mathcal{K}_{\mathbb P}} \inf_{\theta \in \R^d} \sum_{e \in \bV} \mathrm{e}^{\langle \theta , e\rangle }p(e)
 \] is attained on the boundary of $\mathcal{K}_{\mathbb P}$.    
 \end{lemma}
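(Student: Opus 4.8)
The plan is to write $\Phi(\vec p):=\inf_{\theta\in\bR^d}\sum_{e\in\bV}\mathrm{e}^{\langle\theta,e\rangle}p(e)$, so that the supremum in the statement is precisely $\sup_{\vec p\in\cK_\bP}\Phi(\vec p)$, and then to show by a first–order perturbation argument that any maximizer is forced onto $\partial\cK_\bP$. First, since the function $\theta\mapsto\sum_{i=1}^d\big(p(e_i)\mathrm{e}^{\theta_i}+p(-e_i)\mathrm{e}^{-\theta_i}\big)$ is separable, minimizing it coordinatewise (exactly as in the derivation of \eqref{eq:rateat0}) gives $\Phi(\vec p)=\sum_{e\in\bV}\sqrt{p(e)p(-e)}$ for every $\vec p\in\cP_\bV$; in particular $\Phi$ is continuous, so the supremum is attained at some $\vec p_\ast\in\cK_\bP$ by compactness of $\cK_\bP$. (One may also observe that $\Phi$ is concave, being an infimum of affine functions of $\vec p$, and that $\max_{\cP_\bV}\Phi=1$ is attained exactly on $\mathcal D_0$ by the inequality $\sqrt{ab}\le\tfrac12(a+b)$; this is the heuristic reason the constrained maximizer is pushed to $\partial\cK_\bP$ once $\mathrm{int}\,\cK_\bP$ is kept disjoint from $\mathcal D_0$, but the argument below does not use it.)

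For the main step, suppose toward a contradiction that $\vec p_\ast$ lies in the interior of $\cK_\bP$ (relative to the affine span of $\cP_\bV$). Then every coordinate of $\vec p_\ast$ is strictly positive, since a point with $p_\ast(e_0)=0$ lies on a proper face of $\cP_\bV$ and hence cannot be interior to $\cK_\bP$. Fix $e\in\bV$ and let $v_e$ be the vector with entry $+1$ at $e$, entry $-1$ at $-e$, and $0$ in all other coordinates. Since $v_e$ sums to $0$ it is tangent to $\cP_\bV$, and as $\vec p_\ast$ is interior to $\cK_\bP$ we have $\vec p_\ast+t v_e\in\cK_\bP$ for all $t$ in a neighbourhood of $0$. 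Along this segment only the pair $\{e,-e\}$ is affected, so
\[
\Phi(\vec p_\ast+t v_e)=\Phi(\vec p_\ast)-\sqrt{p_\ast(e)p_\ast(-e)}+\sqrt{\big(p_\ast(e)+t\big)\big(p_\ast(-e)-t\big)},
\]
which is a smooth function of $t$ near $0$; since $\vec p_\ast$ is a two–sided local maximum of $\Phi$ along this segment, its derivative at $t=0$, equal to $\tfrac{p_\ast(-e)-p_\ast(e)}{2\sqrt{p_\ast(e)p_\ast(-e)}}$, must vanish, i.e.\ $p_\ast(e)=p_\ast(-e)$. Letting $e$ range over $\bV$ gives $\rmd(\vec p_\ast)=\sum_{i=1}^d\big(p_\ast(e_i)-p_\ast(-e_i)\big)e_i=0$, so $\vec p_\ast\in\mathrm{int}\,\cK_\bP\cap\mathcal D_0$. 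This contradicts the hypothesis, since in the non-nestling case $\cK_\bP\cap\mathcal D_0=\emptyset$ and in the marginally nestling case $\mathrm{int}\,\cK_\bP\cap\mathcal D_0=\emptyset$ by definition. Hence $\vec p_\ast\in\partial\cK_\bP$, which is the assertion.

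I do not expect a genuine obstacle here: the computation is elementary once $\Phi$ is written out explicitly, and the only delicate points are conventions — that an interior point of $\cK_\bP$ has strictly positive coordinates (so the displayed quantity is differentiable at $t=0$) and that $\vec p_\ast\pm t v_e$ stays in $\cK_\bP$ for small $t$, both immediate from interiority; moreover, if $\cK_\bP$ is lower-dimensional then $\mathrm{int}\,\cK_\bP=\emptyset$, the supposition in the main step is vacuous, and the statement is trivial. A slightly more conceptual but essentially equivalent variant invokes the envelope theorem to identify $\nabla\Phi(\vec p_\ast)=(\mathrm{e}^{\langle\theta_\ast,e\rangle})_{e\in\bV}$, where $\theta_\ast$ is the unique minimizer of $\theta\mapsto\sum_{e\in\bV}\mathrm{e}^{\langle\theta,e\rangle}p_\ast(e)$: a maximizer interior to $\cK_\bP$ forces this gradient to be orthogonal to the tangent space of $\cP_\bV$, hence all $\mathrm{e}^{\langle\theta_\ast,e\rangle}$ equal, hence $\theta_\ast=0$, hence $\vec p_\ast\in\mathcal D_0$.
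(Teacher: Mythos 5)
Your proposal is correct and follows essentially the same route as the paper: write the infimum over $\theta$ explicitly as $2\sum_{i=1}^d\sqrt{p_iq_i}$, show via a first-order condition that an interior maximizer must have zero drift (you do this by a direct one-parameter perturbation along $v_e$, the paper by Lagrange multipliers — the same computation), and then contradict the non-nestling or marginally nestling hypothesis. Your treatment is in fact slightly more streamlined in the marginally nestling case, where the paper separately exhibits a zero-drift boundary point attaining the universal maximum, whereas you get the contradiction directly from $\mathrm{int}(\mathcal K_{\mathbb P})\cap\mathcal D_0=\emptyset$.
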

\begin{proof}
 We know that $\inf_{\theta \in \R^d} \sum_{e \in \bV} \mathrm{e}^{\langle \theta , e\rangle }p(e)=2\sum_{i=1}^d \sqrt{p_iq_i},$ where $p_i:=p(e_i)$ and $q_i:=p(-e_i)$. Thus, it is enough to study $\sup_{p \in \mathcal{K}_{\mathbb P}} \sum_{i=1}^d \sqrt{p_iq_i}$ subject to the constraint $\sum_{i=1}^d (p_i+q_i)=1$ and $p_i,q_i > 0$, as the environment is uniformly elliptic. Applying the method of Lagrange multipliers, we start by 
 defining the Lagrangian function of the $p_i$, $q_i$ and $\lambda\in\mathbb R$,
 \[
 L(p,\lambda):=\sum_{i=1}^d \sqrt{p_iq_i}+\lambda \Big(\sum_{i=1}^d (p_i+q_i)-1\Big).
 \]
 Thus,
 $$\frac{\partial L}{\partial p_i}=\frac{1}{2} \sqrt{\frac{q_i}{p_i}}+\lambda=0$$
 $$\frac{\partial L}{\partial q_i}=\frac{1}{2} \sqrt{\frac{p_i}{q_i}}+\lambda=0$$
 $$\sum_{i=1}^d (p_i+q_i)=1$$
 Solving these equations, we obtain that $\lambda=-\frac{1}{2}$ and $p_i=q_i$ for all $i$. Hence, the only local maximum of $\sum_{i=1}^d \sqrt{p_iq_i}$ corresponds to the zero drift case. Now, suppose that $\sup_{\vec{p}\in \mathcal{K}_{\mathbb P}} \inf_{\theta \in \R^d} \sum_{e \in \bV} \mathrm{e}^{\langle \theta , e\rangle }p(e)$ is achieved at some $\vec{p}_0 \in \textrm{int}(\mathcal{K}_{\mathbb P})$.
 In the non-nestling case, this would imply that
  $\sum_{i=1}^d \sqrt{p_iq_i}$ would have a local maximum at $\vec{p}_0$ where $\vec{p}_0$ has a non-zero drift, leading to a contradiction. On the other hand, in the marginally nestling case, we know that there exists some $\vec{p}_1\in\partial \mathcal K_{\mathbb P}$, such that the $\rmd(\vec{p}_1)=0$. But it is easy to see that for this $\vec{p}_1=(p_{1,1},\ldots,p_{1,d},q_{1,1},\ldots, q_{1,d})$ one has that
\[
  \sum_{i=1}^d\sqrt{p_{1,i}q_{1,i}}=\sum_{i=1}^d p_{1,i} = \sum_{i=1}^d q_{1,i}=\frac{d}{2},
\]
  which is the common value at all points with zero drift. 
  This completes the proof. 
\end{proof}


\begin{thebibliography}{WWWW98}


  
\bibitem[BC17]{BC17}  G. Barraquand and I. Corwin.
  \newblock Random walk in Beta-distributed random
  environment.
  \newblock {\it Probab. Theory Relat. Fields}, 167, 1057--1116 (2017).

  \bibitem[BMRS23]{BMRS23}
R. Bazaes, C. Mukherjee, A. Ram\'irez and  S. Saglietti.
\newblock{The quenched and averaged large deviation rate functions for
  random walks in random environment: the impact of disorder.}
\newblock
{\it Ann. Appl. Probab.} 33, 2210--2246 (2023).

  \bibitem[BMRS23b]{BMRS23b}
R. Bazaes, C. Mukherjee, A. Ram\'irez and  S. Saglietti.
\newblock{   The effect of disorder on quenched and averaged large deviations for random walks in random environments: boundary behavior.}
\newblock {\it Stochastic Process. Appl.} 158, 208--237 (2023). 

\bibitem[BDR11]{BDR11}
N. Berger, A. Drewitz and A.F. Ram\'irez.
\newblock{Effective polynomial ballisticity conditions for random walk
              in random environment},
\newblock {\it Comm. Pure Appl. Math.}, 67, 12, 1947--1973 (2014).
    
\bibitem[B09]{B09} D. Bertsekas.
\newblock{ Convex optimization theory.}
\newblock {\it Athena Scientific}, Nashua, NH, (2009).

\bibitem[CGZ00]{CGZ00} F. Comets, N. Gantert and O. Zeitouni.
  \newblock Quenched, annealed and functional large deviations for
  one-dimensional random walk in random environment.
  \newblock {\it Probab. Theory Related Fields} 118, 65--114 (2000).

\bibitem[E89]{E89} L. C. Evans.
\newblock The perturbed test function method for viscosity solutions of nonlinear PDE.
\newblock {\it Proc. Roy. Soc. Edinburgh Sect. A}, 119, 359--375 (1989).



\bibitem[DZ98]{DZ98} A. Dembo and O. Zeitouni.
  \newblock Large deviations techniques and applications.
  \newblock Applications of Mathematics (New York), 38
  Springer, New York (1998).


\bibitem[DR14]{DR14} A. Drewitz and A.F. Ram\'\i rez.
  \newblock Selected topics in random walk in random environment.
  \newblock Springer Proc. Math. Stat., 69
  Springer, New York, 23--83 (2014).

\bibitem[GdH94]{GdH94} A. Greven and F. den Hollander.
  \newblock Large deviations for a random walk in random environment.
  \newblock {\it Ann. Probab.} 22, 1381--1428 (1994).

  \bibitem[LPV87]{LPV87} P.L. Lions, G. Papanicolaou and S.R.S. Varadhan.
  \newblock Homogenization of Hamilton-Jacobi equations.
  \newblock Preprint, 1987.

\bibitem[R11]{R11} F. Rezakhanlou.
  \newblock A prelude to the theory of random walks in random
  environments.
  \newblock {\it Bull. Iranian Math. Soc. } 37, 5--20 (2011).


\bibitem[Si58]{Si58} M. Sion.
\newblock On general minimax theorems.
\newblock {\it  Pacific J.  Math.} 8 (1): 171--176 (1958).

\bibitem[S00]{S00} A.-S. Sznitman.
\newblock Slowdown estimates and central limit theorem
for random walks in random environment.
\newblock {\it J. Eur. Math. Soc.}, 2, 93--143 (2000).

\bibitem[S01]{S01}
    A.-S. Sznitman.
    \newblock On a class of transient random walks in random environment.
    \newblock {\it Ann. Probab.}, 29, 2, 724--765 (2001).


  \bibitem[T02]{T00}
    T. Takenami.
    \newblock Local limit theorem for random walk in periodic
    environment.
    \newblock {\it Osaka J. Math.}, 39, 867--895 (2002).


\bibitem[V03]{V03} S.R.S. Varadhan.
  \newblock Large deviations for random walks in random environment.
  \newblock {\it Comm. Pure Applied Math.} LVI, 1222--1245 (2003).

\bibitem[Y11]{Y11} A. Yilmaz.
  \newblock  Equality of averaged and quenched large deviations for random walks in random environments in dimensions four and higher.
  \newblock {\it Probab. Theory Relat. Fields}  (149)463-491 (2011).
  
\bibitem[Z98]{Z98} M. Zerner.
  \newblock Lyapounov exponents and quenched large deviations for
  multidimensional random walk in random environment.
  \newblock {\it Ann. Probab.} 26, 1446--1476 (1998).

\end{thebibliography}
\end{document}